\theoremstyle{definition}
\newtheorem{theorem}{Theorem}[section]
\newtheorem{lemma}[theorem]{Lemma}
\newtheorem{example}[theorem]{Example}
\theoremstyle{remark}
\newcommand{\T}{\mathrm{T}}
\DeclareMathOperator*\trace{trace}
\DeclareMathOperator*\diag{diag}
\newcommand{\cD}{\mathcal{D}}
\newcommand{\bbD}{\mathbb{D}}
\newcommand{\cE}{\mathcal{E}}
\newcommand{\bbE}{\mathbb{E}}
\newcommand{\cF}{\mathcal{F}}
\newcommand{\cI}{\mathcal{I}}
\newcommand{\cN}{\mathcal{N}}
\newcommand{\bbR}{\mathbb{R}}
\def\FT{f-reSGLD}
\title{Fast Replica Exchange Stochastic Gradient Langevin Dynamics}
\date{}
\author{Guanxun Li\footnote{Department of Statistics, Texas A\&M University, College Station, TX 77843, USA. (Email: guanxun@stat.tamu.edu)}, ~Guang Lin \footnote{Department of Mathematics, Purdue University, West Lafayette, IN 47907, USA. (Email: guanglin@purdue.edu)}, 
~Zecheng Zhang \footnote{Department of Mathematics, Carnegie Mellon University, Pittsburgh, PA 15213, USA. (Email: zecheng.zhang.math@gmail.com)}, ~Quan Zhou \footnote{Department of Statistics, Texas A\&M University, College Station, TX 77843, USA. (Email: quan@stat.tamu.edu) } }
\begin{document}

\maketitle

\begin{abstract}
Application of the replica exchange (i.e., parallel tempering) technique to Langevin Monte Carlo algorithms, especially stochastic gradient Langevin dynamics (SGLD), has scored great success in non-convex learning problems, but one potential limitation is the computational cost caused by running multiple chains. 
Upon observing that a large variance of the gradient estimator in SGLD essentially increases the temperature of the stationary distribution, we propose expediting tempering schemes for SGLD by directly estimating the bias caused by the stochastic gradient estimator. 
This simple idea enables us to simulate high-temperature chains at a negligible computational cost (compared to that of the low-temperature chain) while preserving the convergence to the target distribution.   
Our method is fundamentally different from the recently proposed m-reSGLD (multi-variance replica exchange SGLD) method in that the latter suffers from the low accuracy of the gradient estimator (e.g. the chain can fail to converge to the target) while our method benefits from it. 
Further, we derive a swapping rate that can be easily evaluated, providing another significant improvement over m-reSGLD. 
To theoretically demonstrate the advantage of our method, we develop convergence bounds in Wasserstein distances.  
Numerical examples for Gaussian mixture and inverse PDE models are also provided, which show that our method can converge quicker than the vanilla multi-variance replica exchange method. 
\end{abstract}

\section{Introduction}
Given a probability distribution $\pi(\theta) \propto \exp(- U(\theta))$, where the energy function $U$ is assumed known, one can use Markov chain Monte Carlo (MCMC) methods to generate samples from $\pi$. When $\pi$ is log-concave (i.e., $U$ is convex), it is known that most MCMC algorithms used in practice (e.g. Metropolis-Hastings and Hamiltonian Monte Carlo algorithms) are rapidly mixing~\citep{dwivedi2018log, mangoubi2021mixing}. 
However, in reality, we are often faced with much more challenging problems where $\pi$ tends to be severely multimodal. 
One of the most widely used techniques for overcoming multimodality is parallel tempering, which is also known as replica exchange Monte Carlo~\citep{machta2009strengths}. 
The main idea is that, instead of targeting $\pi$, one can devise MCMC algorithms targeting $\pi_\tau (\theta) \propto \exp(- U(\theta) / \tau)$, where the parameter $\tau > 0$ is often called the temperature. 
When $\tau$ is sufficiently large, $\pi_\tau$ has a flat shape such that the chain can move between local modes without much difficulty. 
Replica exchange, in its simplest form, runs two MCMC algorithms in parallel with one targeting $\pi_{\tau_1}$ for some small $\tau_1$ (i.e., the ``low-temperature'' chain) and the other targeting $\pi_{\tau_2}$ for some large $\tau_2$ (i.e., the ``high-temperature'' chain). The low-temperature chain can quickly find the nearby local mode, while the high-temperature one keeps exploring the whole state space. 
By allowing the two chains to swap their states, the low-temperature chain is then able to efficiently jump between local modes. To recover the distribution $\pi$, one can simply set $\tau_1 = 1$ and collect the samples of the low-temperature chain, or use the importance tempering method~\citep{gramacy2010importance}. 
Compared with single-chain MCMC methods, one potential limitation of this scheme is that the computational cost doubles in every iteration.  

\subsection{Background and Motivations}\label{sec:intro.reSGLD}
We consider in this paper using Langevin Monte Carlo methods for simulating each  chain. 
Under some regularity assumptions on $U$, we can construct a Langevin diffusion (LD)  with stationary distribution $\pi_\tau$~\citep{roberts2002langevin, nguyen2019non}; see Eq.~\eqref{eq:LD-SDE}.  
For huge data sets, exactly evaluating $U$ or the gradient of $U$ (which is needed to simulate LD) can be quite time-consuming, and a popular approach used in both the sampling and optimization literature is to estimate $U$ and $\nabla U$ using a random batch of samples~\citep{welling2011bayesian, dalalyan2017further, simsekli2020fractional}; this method is known as stochastic gradient Langevin dynamics (SGLD). 
We note that other methods can also be employed for the gradient estimation; for example, for inverse PDE problems~\citep{efendiev2006preconditioning, stuart2010inverse, chung2020multi}, $U$ is calculated via a forward PDE solver, and to save computational time, one may use a fast solver that only provides an approximation of $U$. 
For non-convex optimization and multimodal sampling problems, a vanilla Langevin Monte Carlo algorithm can get trapped at local modes, and the replica exchange LD (reLD) and replica exchange SGLD (reSGLD) algorithms were proposed to combine replica exchange with Langevin dynamics~\citep{chen2020accelerating, deng2020non}. Both methods run two chains in parallel, and it has been shown theoretically that reLD and reSGLD require fewer iterations to converge than single-chain methods~\citep{zhang2017hitting, raginsky2017non, deng2020non, chen2020accelerating}.

However, since reSGLD employs two chains, its computational cost per iteration doubles and its total computational cost can possibly be higher than that of the single-chain SGLD (one can of course generalize reSGLD by running more than two chains in parallel, in which case the total computational cost may be much higher). 
To tackle this issue, multi-variance reSGLD~\citep{lin2022multi, lin2021accelerated, na2022replica} (m-reSGLD) was proposed aiming to lower the cost of the high-temperature chain. 
Unlike reSGLD,  m-reSGLD uses different gradient estimation schemes for the two chains, and the estimator for the high-temperature chain is computationally more efficient (and also has a larger mean squared error) than that for the low-temperature chain. 
Since the high-temperature chain is used for exploration and the low-temperature one for exploitation, one expects that the high-temperature chain can probably tolerate a larger degree of error in gradient estimation. 
This intuition is supported by the empirical success of m-reSGLD in many challenging tasks~ \citep{lin2022multi, lin2021accelerated}. 
Nevertheless, the advantage of m-reSGLD over reSGLD is not theoretically justified. More importantly, it is unclear how the gradient estimation error in the high-temperature chain affects the overall performance of the algorithm, and how large this error is allowed to be so that the high-temperature chain is still exploring the whole space according to the tempered target distribution. 

\subsection{Main Contributions of This Work}\label{sec:our.contributions} 
We propose a simple but highly effective method, with theoretical guarantees,  for simulating high-temperature SGLD chains at a negligible computational cost. 
The key idea is to learn the ``effective temperature'' of an SGLD chain by estimating the variance of the gradient estimator.  
Let $v(\theta)$ denote the covariance matrix of the gradient estimator $\widehat{\nabla U}(\theta)$, which may depend on the state $\theta$.  Although a larger $v(\theta)$ is undesirable from an estimation perspective, for high-temperature chains in a replica exchange scheme, it brings the randomness that is needed to encourage exploration. Indeed, when we numerically simulate a discrete-time approximation of the Langevin diffusion, at each time step we need to inject random noise. Merging the noise of $\widehat{\nabla U}(\theta)$  with the injected noise, we see that the high variability of $\widehat{\nabla U}(\theta)$ essentially increases the temperature of the chain. From a different angle, this also implies that to achieve a fixed temperature, instead of injecting a large random noise, we can also increase $v(\theta)$ by using a ``rougher'' gradient estimator. 
The details of the derivation are given in Section~\ref{sec_pp}. 
We note that, for single-chain SGLD methods, the bias correction of the gradient estimator has been well-studied~\citep{teh2016consistency, vollmer2016exploration}. 
The novelty of our method is that we deliberately use this bias (and even increase it) to run a high-temperature SGLD chain at a much smaller cost. 
We call our method \FT{} (fast replica exchange SGLD).

For most target distributions encountered in practice,  $v(\theta)$ is unknown, but it is usually not too difficult to construct a ``good'' estimator $\widehat{v}(\theta)$ (see Eq.~\eqref{eq:var-est}), in which case our theoretical analysis shows that the computational gain of our method can be very significant.  
Further, we derive rates of convergence of \FT{} to the target distribution in 2-Wasserstein distance. Compared to~\citet[Theorem 1]{deng2020non}, the error due to the gradient estimation, which is the dominant term, is reduced. 
Though replica exchange is mostly used to solve non-convex problems with multiple local optima, it may also provide an efficient solution to convex problems, especially when a good initialization is not available. We derive a convergence result when the energy function $U(\theta)$ is strongly convex. Compared to the non-convex version of the convergence estimation, the error term due to the estimation of the gradient will vanish as the learning rate goes to zero.

We summarize our main contributions as follows.
 
\begin{enumerate}
\item We propose simulating high-temperature SGLD chains by treating the variance of the gradient estimator as the main source of randomness, which can significantly reduce the computational cost. 
\item We propose a swapping rate (between two chains) for the \FT{} algorithm, which is computationally more efficient than the rule used in m-reSGLD~\citep{lin2022multi}.
\item We prove convergence bounds in 2-Wasserstein distance for both convex and non-convex target distributions, which illustrate the advantage of the \FT{} algorithm. 
\end{enumerate}

The rest of the paper is organized as follows. In Section~\ref{sec_prelim}, we review SGLD,  reSGLD and m-reSGLD algorithms.  
We formally introduce our method in Section~\ref{sec_pp} and present the theoretic convergence results in Section~\ref{sec_convergence}. Numeric experiments are given in Section~\ref{sec_exp}. Finally, we conclude our work with a discussion in Section~\ref{sec_con}. All proofs are relegated to the Appendix.

\section{Preliminaries}\label{sec_prelim}
\subsection{Stochastic gradient Langevin dynamics}\label{sec:prelim.sgld}
Suppose we are interested in sampling from a distribution 
\[\pi_{\tau}(\theta ) \propto \exp (-U(\theta )/ \tau),\]
where $\theta \in\bbR^p$, $\tau > 0$ is the temperature and the energy function $U$ is known.   
One method for generating samples from $\pi_{\tau}(\theta )$ is to simulate a stochastic process whose stationary distribution is $\pi_{\tau}$. 
Under certain regularity conditions (see, e.g.~\citet{bhattacharya1978criteria,roberts1996exponential}),  the following stochastic differential equation (SDE), known as Langevin diffusion (LD), has stationary distribution $\pi_\tau$: 
\begin{equation}\label{eq:LD-SDE}
d\theta_t  = -\nabla U(\theta_t )\,dt + \sqrt{2\tau}d W_t,
\end{equation}
where $W_t$ is the $p$-dimensional standard Brownian motion.  To perform sampling in practice, we discrete \eqref{eq:LD-SDE} by 
\begin{equation}\label{eq:LD_discret}
\theta_{k + 1, \eta}  = \theta_{k, \eta}  - \eta_k \nabla U(\theta_{k, \eta} ) + \sqrt{2 \eta_k \tau}\xi_k,
\end{equation}
where $\xi_k\sim\cN(0, I_p)$ and $\eta_k$ is the step size used in the $k$-th step. We will refer to~\eqref{eq:LD_discret} as the ``exact update''. In many problems, $U(\theta)$ can be further written as  $U(\theta ) = \sum_{i = 1}^N U_i(\theta )$ for some functions $U_1, U_2, \dots, U_N$, and 
\begin{equation}\label{eq:full-deriv}
\nabla U(\theta ) = \sum_{i = 1}^N \nabla U_i(\theta ).
\end{equation} 

\begin{example}
Consider Bayesian inference with $N$ i.i.d. observations. Let $\cD = \{X_i\}_{i=1}^N$ be the data we observe and $f( \cdot | \theta )$ be the probability density function of an observation given parameter $\theta$. Let $\Pi(\theta )$ be the prior distribution we put on $\theta$. The posterior distribution, denoted by $\Pi(\theta | \cD)$, can be computed by 
\[\Pi(\theta | \cD) \propto \Pi(\theta)\prod_{i = 1}^N f(X_i | \theta).\]
Letting $\pi(\theta) = \Pi(\theta | \cD)$, we can express the energy function by 
\[U(\theta) = -\log \Pi(\theta) - \sum_{i = 1}^N \log f(X_i | \theta) = \sum_{i = 1}^N U_i(\theta),\]
where $U_i(\theta) = -\frac{1}{N}\log \Pi(\theta) - \log f(X_i | \theta)$.
\end{example}

When the sample size $N$ is large, implementing \eqref{eq:LD_discret} can be time-consuming due to the evaluation of $\nabla U$.  Stochastic gradient Langevin diffusion (SGLD) then can be used to speed up the LD simulation. Instead of exactly calculating $\nabla U$ by~\eqref{eq:full-deriv}, we sample $n$ observations from $\cD$, and estimate $\nabla U(\theta)$ by
\begin{equation}\label{eq:sgld.estimator}
\widehat{\nabla U}(\theta) \coloneqq  \frac{N}{n}\sum_{i\in\cI_n}\nabla U_i(\theta),
\end{equation}
where $\cI_n$ is a random sample of size $n$ drawn from $\{1, \dots, N\}$ without replacement. The update of SGLD targeting $\pi_\tau$ is then given by 
\[\widehat \theta_{k+1, \eta} = \widehat \theta_{k, \eta} - \eta_k \widehat{\nabla U}(\widehat \theta_{k, \eta}) + \sqrt{2\eta_k \tau}\xi_k.\]
In SGLD, the step size $\eta_k$ is chosen such that $\eta_k \rightarrow 0$ as $k \rightarrow \infty$. We note that if the goal is to generate samples from $\pi_\tau$, one can also use the SGLD dynamics by choosing a constant step size and using the Metropolis-Hastings rule to correct for the discretization bias, which is known as the Metropolis-adjusted Langevin algorithm~\citep{besag1994comments,roberts1998optimal}; in particular, as long as one has an unbiased estimator of $e^{-U / \tau}$, the pseudo-marginal MCMC technique can be used to evaluate the acceptance ratio~\citep{andrieu2009pseudo}. 

\subsection{Replica Exchange SGLD}
\label{sec_resgld}
Replica exchange Langevin diffusion (reLD) is an algorithm that aims to accelerate the convergence of the SDE when the target is non-convex or multimodal~\citep{chen2020accelerating}. Letting $\tau_1, \tau_2$ denote two temperatures with $\tau_2 > \tau_1$, define two parallel LDs by 
\begin{align}\label{eq:reLD}
\begin{split}
d\theta^{(1)}_t &= -\nabla U(\theta^{(1)}_t)\,dt + \sqrt{2\tau_1}d W_t^{(1)},\\
d\theta^{(2)}_t &= -\nabla U(\theta^{(2)}_t)\,dt + \sqrt{2\tau_2}d W_t^{(2)},
\end{split}
\end{align}
where $W_t^{(1)}$ and $W_t^{(2)}$ are two independent Brownian motions. The 
reLD algorithm further allows two LDs to swap their states, i.e., moving from 
\[(\theta^{(1)}_t = y_1, \theta^{(2)}_t = y_2) \text{ to }(\theta^{(1)}_{t + dt} = y_2, \theta^{(2)}_{t + dt} = y_1)\]
with probability $a\min\{1, S(\theta^{(1)}_t, \theta^{(2)}_t)\}\,dt$, where $a > 0$ is a constant, 
\begin{equation}\label{eq:swap-reLD}
S(\theta^{(1)}, \theta^{(2)}) \coloneqq \exp\left\{\tau_{\delta}\left(U(\theta^{(1)}) - U(\theta^{(2)})\right)\right\},    
\end{equation}
and $\tau_{\delta} = 1 / \tau_1 - 1 / \tau_2$. It is well known that the stationary distribution of reLD is \citep{chen2020accelerating}
\begin{equation}\label{eq:stationary-dis}
\pi_{\rm{re}}(\theta_t^{(1)}, \theta^{(2)}_t) \propto \exp\left\{-\frac{U(\theta_t^{(1)})}{\tau_1} - \frac{U(\theta_t^{(2)})}{\tau_2}\right\}.  
\end{equation}

\citet{deng2020non} first proposed replica exchange stochastic gradient Langevin diffusion (reSGLD). Suppose we have an energy function estimator $ \widehat U(\theta)$ and a gradient estimator $\widehat{\nabla U}(\theta)$.  
We simulate a discrete-time approximation of two tempered LDs by 
\begin{align}\label{eq:reSGLD}
\begin{split}
 \widehat \theta_{k + 1, \eta}^{(1)} &=  \widehat \theta_{k, \eta}^{(1)} - \eta_k \widehat{\nabla U}( \widehat \theta_{k, \eta}^{(1)}) + \sqrt{2\eta_k\tau_1}\xi_k^{(1)}\\
 \widehat \theta_{k + 1, \eta}^{(2)} &=  \widehat \theta_{k, \eta}^{(2)} - \eta_k  \widehat{ \nabla U}( \widehat \theta_{k, \eta}^{(2)}) + \sqrt{2\eta_k\tau_2}\xi_k^{(2)},
\end{split}
\end{align}
where $\xi_k^{(1)}$ and $\xi_k^{(2)}$ independently follow the standard normal distribution. 
Assuming $\widehat{U}(\theta) \sim \cN(U(\theta), \sigma^2)$, \cite{deng2020non} proposed to use the swapping rate 
$a\eta_k\min\{1,  \widehat S( \widehat \theta^{(1)}_{k, \eta},  \widehat \theta^{(2)}_{k, \eta})\}$, where
\begin{equation}\label{eq:reSGLD-swap}
\widehat S(\theta^{(1)}, \theta^{(2)})   \coloneqq  e^{   \tau_{\delta}\left( \widehat U(\theta^{(1)}) -  \widehat U(\theta^{(2)}) - \tau_{\delta}\sigma^2\right) }.
\end{equation} 
Since $\bbE[ e^{ b \widehat{U} (\theta) } ] = e^{b U(\theta) + b^2 \sigma^2 / 2}$ for any $b \in \bbR$, we have $\bbE[ \widehat{S} (\theta^{(1)}, \theta^{(2)})] = S (\theta^{(1)}, \theta^{(2)})$. This swapping rate is not exactly ``unbiased'', since by Jensen's inequality, 
\begin{align*}
    \bbE[ \min\{ 1 , \,  \widehat{S} (\theta^{(1)}, \theta^{(2)}) \}] \leq \min\{1,  S (\theta^{(1)}, \theta^{(2)})\}, 
\end{align*}
and the strict inequality holds when $\sigma > 0$. Empirically, it was found in~\citet{deng2020non} that this rule works well for most problems. 

\citet{lin2022multi} generalized reSGLD by using different estimators for the two chains. More specifically, suppose we have 
\[\widehat U_1(\theta) \sim \cN(U(\theta), \sigma_1^2) \text{ and } \widehat U_2(\theta) \sim \cN(U(\theta), \sigma_2^2),\]
where $\widehat U_1$ is the energy function estimator used in the low-temperature chain and $\widehat U_2(\theta)$ is used in the high-temperature chain. The m-reSGLD algorithm updates the two discrete-time processes $\widehat{\theta}^{(1)}, \widehat{\theta}^{(2)}$ by 
\begin{align}\label{eq:multi-reSGLD}
\begin{split}
\widehat \theta_{k + 1, \eta}^{(1)} &= \widehat \theta_{k, \eta}^{(1)} - \eta_k \widehat{\nabla U}_1(\widehat \theta_{k, \eta}^{(1)}) + \sqrt{2\eta_k\tau_1}\xi_k^{(1)}\\
\widehat \theta_{k + 1, \eta}^{(2)} &= \widehat \theta_{k, \eta}^{(2)} - \eta_k \widehat{\nabla U}_2(\widehat \theta_{k, \eta}^{(2)}) + \sqrt{2\eta_k\tau_2}\xi_k^{(2)}
\end{split}
\end{align}
with swapping rate $a\eta_k\min\{1, \widehat S(\widehat \theta^{(1)}_{k, \eta}, \widehat \theta^{(2)}_{k, \eta})\}$. Letting $a_1, a_2$ be two non-negative constants such that $a_1 + a_2 = 1$, we can calculate the function $\widehat S(\theta^{(1)}, \theta^{(2)})$ by 
\begin{align}\label{eq:multi-reSGLD-swap}
\begin{split}
 \widehat S(\theta^{(1)}, \theta^{(2)}) = & \exp \Bigl\{ \tau_{\delta}\Bigl[ a_1\left(\widehat U_1(\theta^{(1)}) - \widehat U_1(\theta^{(2)})\right) + a_2\left(\widehat U_2(\theta^{(1)}) - \widehat U_2(\theta^{(2)})\right) \\
& - (a_1^2 \sigma_1^2 + a_2^2 \sigma_2^2) \tau_{\delta} \Bigl] \Bigr\}. 
\end{split}  
\end{align} 
By a straightforward calculation, one can verify that 
\[\bbE[\widehat S(\theta^{(1)}, \theta^{(2)})] = S(\theta^{(1)}, \theta^{(2)}).\] 

\section{Fast Tempering for SGLD via Bias Correction}
\label{sec_pp}
We now propose our method, \FT{}, which improves on reSGLD in terms of both accuracy and efficiency.  
To simplify the discussion, as in~\citep{deng2020non, lin2022multi}, we assume our estimator of gradient follows a normal distribution, i.e., $\widehat{\nabla U}(\theta) \sim \cN(\nabla U(\theta), s(\theta)s(\theta)^\top)$ for some positive definite $s(\theta) \in \bbR^{p \times p}$.   
Note that we allow the covariance matrix of the estimator to depend on $\theta$. 

First, consider a single SGLD chain targeting $\pi_\tau \propto e^{-U / \tau}$. We propose to simulate the discrete-time process $\widehat{\theta}$ with dynamics given by 
\begin{equation}\label{eq:adj-LD}
\widetilde \theta_{k+1, \eta} = \widetilde \theta_{k, \eta} - \eta_k \widehat{\nabla U}(\widetilde \theta_{k, \eta}) + \sqrt{2} c_k (\widetilde \theta_{k, \eta})\xi_k,
\end{equation}
where $\xi_k\sim\cN(0, I_p)$, and the matrix $c_k(\theta)$ is assumed to be positive definite such that 
\begin{equation}\label{eq:def-c}
c_k(\theta) c_k(\theta)^\top \coloneqq  \tau \eta_k I_p - \frac{1}{2} \eta^2_k s(\theta)s(\theta)^\top. 
\end{equation}
This can always be satisfied by letting $\eta_k$ be sufficiently small. 
To see the reasoning behind \eqref{eq:def-c}, notice that we can rewrite \eqref{eq:adj-LD} as
\begin{align*}
\widetilde \theta_{k + 1, \eta} =\,& \widetilde \theta_{k, \eta}  - \eta_k \nabla U(\widetilde \theta_{k, \eta})  + \eta_k (\nabla U(\widetilde \theta_{k, \eta}) - \widehat{\nabla U}(\widetilde \theta_{k, \eta})) + \sqrt{2} c_k(\widetilde \theta_{k, \eta})\xi_k\\    
=\, & \widetilde \theta_{k, \eta} - \eta_k \nabla U(\widetilde \theta_{k, \eta}) - \eta_k s(\widetilde \theta_{k, \eta}) \zeta_k  + \sqrt{2} c_k(\widetilde \theta_{k, \eta}) \xi_k,
\end{align*}
where $\zeta_k \sim \cN(0, I_p)$ and independent of $\xi_k$. By~\eqref{eq:def-c}, we see that~\eqref{eq:adj-LD} is the same as the exact upate~\eqref{eq:LD_discret}. 
This is known as the bias correction for SGLD methods~\citep{teh2016consistency, vollmer2016exploration}. 

In practice, usually, we do not know the true covariance matrix $s(\theta)s(\theta)^\top$, but we can estimate it. Suppose we have an estimator $\widehat{s}(\theta)$ such that 
\[s(\theta)s(\theta)^\top = \widehat{s}(\theta) \widehat{s}(\theta)^\top + \varphi(\theta) \varphi(\theta)^\top,\]
where $\varphi(\theta)$ satisfies 
\begin{equation}\label{eq:var-est}
\trace ( \varphi(\theta)\varphi(\theta)^\top ) = o( \trace ( s(\theta) s(\theta)^\top)  )     
\end{equation}
for any $\theta$. In~\eqref{eq:var-est}. Note that 
\[\trace ( \varphi(\theta)\varphi(\theta)^\top ) = \bbE[\| \varphi (\theta) \zeta \|_2^2]\]
where $\zeta \sim \cN(0, I_p)$.  Then, we propose the update
\begin{equation}\label{eq:adj-SGLD}
\widetilde \theta_{k + 1, \eta} = \widetilde \theta_{k, \eta} - \eta_k \widehat{\nabla U}(\widetilde \theta_{k, \eta}) + \sqrt{2} \widehat{c}_k(\widetilde \theta_{k, \eta})\xi_k,    
\end{equation}
where $\widehat{c}_k(\theta)$ is assumed to be positive definite such that 
\[\eta^2_k \widehat{s}(\theta) \widehat{s}(\theta)^\top + 2\widehat{c}_k(\theta)\widehat{c}_k(\theta)^\top = 2\tau \eta_k I_p.\]
An analogous calculation shows that~\eqref{eq:adj-SGLD} can also be expressed by 
\begin{align}\label{eq:adj-update}
\begin{split}
\widetilde \theta_{k+1, \eta}  =\;&  \widetilde \theta_{k, \eta} - \eta_k \nabla U(\widetilde \theta_{k, \eta})  \\
& - \eta_k \varphi(\widetilde \theta_{k, \eta}) \zeta_k + \sqrt{2\tau\eta_k}\xi_k.
\end{split}
\end{align} 

To extend this method to the multi-variance replica exchange SGLD, we assume we have access to the following independent estimators, 
\begin{align*}
\widehat U_1(\theta) &\sim \cN(U(\theta), \sigma_1^2(\theta)), &\widehat U_2(\theta) &\sim \cN(U(\theta), \sigma_2^2(\theta)),\\
\widehat{\nabla U}_1(\theta) &\sim \cN(\nabla U(\theta), s_1(\theta)s_1(\theta)^\top), &\widehat{\nabla U}_2(\theta) &\sim \cN(\nabla U(\theta), s_2(\theta)s_2(\theta)^\top). 
\end{align*}
Further, assume we have estimators $\hat{s}_1, \hat{s}_2$ and  positive definite matrix $\widehat{c}_{i,k}(\theta)$ such that 
\begin{align*}
\eta_k^2 \widehat{s}_i(\theta)\widehat{s}_i(\theta)^\top +  2 \widehat{c}_{i,k}(\theta)\widehat{c}_{i,k}(\theta)^\top = \tau \eta_k I_p  
\end{align*}
for $i = 1, 2$.  We update the two chains by 
\begin{align}\label{eq:adj-multi-reSGLD}
\begin{split}
\widetilde \theta_{k + 1, \eta}^{(1)} &= \widetilde \theta_{k, \eta}^{(1)} - \eta_k \widehat{\nabla U}_1(\widetilde \theta_{k, \eta}^{(1)}) + \sqrt{2} \widehat c_{1,k}(\widetilde \theta_{k, \eta}^{(1)})\xi_k^{(1)}\\
\widetilde \theta_{k + 1, \eta}^{(2)} &= \widetilde \theta_{k, \eta}^{(2)} - \eta_k \widehat{\nabla U}_2(\widetilde \theta_{k, \eta}^{(2)}) + \sqrt{2} \widehat c_{2,k}(\widetilde \theta_{k, \eta}^{(2)}) \xi_k^{(2)},
\end{split}
\end{align} 
Additionally, instead of using \eqref{eq:multi-reSGLD-swap}, we define a new swapping rate \\
$a\eta_k\min\{1, \widetilde S(\widetilde \theta^{(1)}_{k, \eta}, \widetilde \theta^{(2)}_{k, \eta})\}$, where
\begin{align}\label{eq:adj-multi-reSGLD-swap}
\begin{split}
\widetilde S(\theta^{(1)}, \theta^{(2)}) \coloneqq \, &  \exp \Bigl\{  \tau_{\delta} \Bigl(\widehat U_1(\theta^{(1)}) - \widehat U_2(\theta^{(2)}) \\
&\qquad - \tau_{\delta}\left(\sigma_1^2(\theta^{(1)}) + \sigma_2^2(\theta^{(2)})\right) / 2\Bigr)\Bigr\}. 
\end{split}
\end{align}
Observe that when $\sigma^2 = \sigma_1^2(\theta^{(1)}) = \sigma_2^2(\theta^{(2)})$, \eqref{eq:adj-multi-reSGLD-swap} is reduced to the equal variance case \eqref{eq:reSGLD-swap}. Compared with \eqref{eq:multi-reSGLD-swap}, the main advantage of the new swapping rate is that we only need to calculate $U_1$ and $U_2$ once in \eqref{eq:adj-multi-reSGLD-swap}. 
This can significantly save computational time in problems where calculating the energy function $U$ is extremely time-consuming (e.g. in the inverse PDE problems presented in Section~\ref{sec:pde}).   
\begin{lemma}\label{thm:swap-estimator}
The estimator \eqref{eq:adj-multi-reSGLD-swap} satisfies 
$\bbE[\widetilde S(\theta^{(1)}, \theta^{(2)})  ] =  S(\theta^{(1)}, \theta^{(2)})$, where
 $S(\theta^{(1)}, \theta^{(2)})$ is given by~\eqref{eq:reSGLD-swap}.
\end{lemma}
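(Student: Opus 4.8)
The plan is to reduce everything to the moment generating function of a normal random variable. The only structural facts needed are (i) $\widehat U_1(\theta^{(1)})$ and $\widehat U_2(\theta^{(2)})$ are independent, (ii) $\widehat U_1(\theta^{(1)}) \sim \cN(U(\theta^{(1)}), \sigma_1^2(\theta^{(1)}))$ and $\widehat U_2(\theta^{(2)}) \sim \cN(U(\theta^{(2)}), \sigma_2^2(\theta^{(2)}))$, and (iii) the identity $\bbE[e^{bZ}] = e^{b\mu + b^2 v / 2}$ for $Z \sim \cN(\mu, v)$ and any $b \in \bbR$ — the same identity already invoked in the excerpt for $\widehat U$.

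First I would fix $\theta^{(1)}, \theta^{(2)}$ and set $Z \coloneqq \widehat U_1(\theta^{(1)}) - \widehat U_2(\theta^{(2)})$. By independence of the two estimators, $Z$ is normal with mean $U(\theta^{(1)}) - U(\theta^{(2)})$ and variance $\sigma_1^2(\theta^{(1)}) + \sigma_2^2(\theta^{(2)})$. Next I would pull the deterministic factor out of the expectation: writing $\sigma_\ast^2 \coloneqq \sigma_1^2(\theta^{(1)}) + \sigma_2^2(\theta^{(2)})$, definition \eqref{eq:adj-multi-reSGLD-swap} gives
\[
\bbE[\widetilde S(\theta^{(1)}, \theta^{(2)})] = e^{-\tau_\delta^2 \sigma_\ast^2 / 2}\, \bbE\!\left[e^{\tau_\delta Z}\right].
\]
Then I would apply the MGF identity with $b = \tau_\delta$, $\mu = U(\theta^{(1)}) - U(\theta^{(2)})$, $v = \sigma_\ast^2$, obtaining $\bbE[e^{\tau_\delta Z}] = e^{\tau_\delta (U(\theta^{(1)}) - U(\theta^{(2)})) + \tau_\delta^2 \sigma_\ast^2 / 2}$. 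Multiplying by the prefactor cancels the $\tau_\delta^2 \sigma_\ast^2 / 2$ term exactly, leaving $e^{\tau_\delta(U(\theta^{(1)}) - U(\theta^{(2)}))}$, which is $S(\theta^{(1)}, \theta^{(2)})$ by \eqref{eq:reSGLD-swap} (recalling that \eqref{eq:swap-reLD} and \eqref{eq:reSGLD-swap} define the same target function $S$). Finally I would remark that the correction term $-\tau_\delta \sigma_\ast^2 / 2$ in the exponent of \eqref{eq:adj-multi-reSGLD-swap} is precisely chosen to perform this cancellation, and that setting $\sigma_1^2(\theta^{(1)}) = \sigma_2^2(\theta^{(2)}) = \sigma^2$ recovers the unbiasedness statement already noted after \eqref{eq:reSGLD-swap}.

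There is no real obstacle here: the proof is a one-line normal MGF computation once independence is used to combine $\widehat U_1(\theta^{(1)})$ and $\widehat U_2(\theta^{(2)})$. The only point requiring a word of care is that the two Gaussian estimators are evaluated at \emph{different} arguments ($\theta^{(1)}$ versus $\theta^{(2)}$), so one must confirm that the independence assumption stated in the display preceding \eqref{eq:adj-multi-reSGLD} — independence of $\widehat U_1$ and $\widehat U_2$ as estimators — indeed yields independence of $\widehat U_1(\theta^{(1)})$ and $\widehat U_2(\theta^{(2)})$ for the specific (possibly random, but conditioned-upon) states, so that the variances simply add.
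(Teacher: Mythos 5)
Your proof is correct and is essentially the paper's own argument: the paper's proof is the same one-line computation, invoking the normal moment generating function and the independence of $\widehat U_1(\theta^{(1)})$ and $\widehat U_2(\theta^{(2)})$, which you have merely written out in full (note only that the true swap function $S$ is the one in \eqref{eq:swap-reLD}; \eqref{eq:reSGLD-swap} defines the estimator $\widehat S$, a labeling quirk inherited from the lemma statement itself). No gaps.
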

\begin{proof} 
A routine calculation  using $\bbE[ e^{b Z }] = e^{b^2 / 2}$ for $Z \sim \cN(0, 1)$ and 
the independence between $\widehat{U}_1(\theta^{(1)})$, $\widehat{U}_2(\theta^{(2)})$ yields the result. 
\end{proof} 

\section{Convergence Analysis}
\label{sec_convergence}
In this section, we would like to introduce our theoretical results. We need more definitions and assumptions for later use to prove the convergence results. For two probability measures $\mu$ and $\nu$, the $2$-Wasserstein distance between $\mu$ and $\nu$ is defined by
\[W_2(\mu, \nu) = \left(\inf_{\gamma \in \Gamma(\mu, \nu)} \int \|\theta_{\mu} - \theta_{\nu}\|_2^2\, d\gamma(\theta_{\mu}, \theta_{\nu})\right)^{1/2},\]
where $\Gamma(\mu, \nu)$ is the coupling space, which includes all joint distributions $\gamma$ having $\mu$ and $\nu$ as marginal distributions. For any function $f$ that is continuously differentiable, we define the Dirichlet form as
\begin{equation}\label{eq:diri-form}
\bbD(U) \coloneqq \int \tau_1 \|\nabla_{\theta^{(1)}} f\|_2^2 + \tau_2 \|\nabla_{\theta^{(2)}} f\|_2^2 \, d\pi_{\rm{re}}(\theta^{(1)}, \theta^{(2)}),   
\end{equation}
where $\pi_{\rm{re}}$ is the stationary distribution defined in \eqref{eq:stationary-dis}. Moreover, define
\begin{align}\label{eq:diri-form-stationary}
\begin{split}
&\bbD_S(U) \coloneqq \bbD(U) \\
& + \frac{a}{2} \int S(\theta^{(1)}, \theta^{(2)}) \left(f(\theta^{(2)}, \theta^{(1)}) - f(\theta^{(1)}, \theta^{(2)})\right)^2 \, d\pi_{\rm{re}}(\theta^{(1)}, \theta^{(2)}).
\end{split}
\end{align}
For any two probability measures $\mu$ and $\nu$, we denote the relative entropy by
\begin{equation}\label{eq:rel-entropy}
\cE(\mu | \nu) = \int \log\frac{d\mu}{d\nu}\,d\mu.
\end{equation}
Following are some assumptions we need for the energy function and its derivative. 
\begin{enumerate}
\item Dissipativity. \label{cond3} The function $U$ is $(\alpha, \beta)$-dissipativity for $\alpha > 0$ and  $\beta \geq 0$, that is
\[\langle \theta, \nabla U(\theta) \rangle \geq \alpha \|\theta\|_2^2 - \beta\]
for all $\theta\in\bbR^p$.

\item Smoothness. \label{cond2} The gradient of function $U$ is $M$-Lipschitz continuous, that is
\[\|\nabla U(\theta) - \nabla U(\theta')\|_2 \leq M\|\theta - \theta'\|_2\]
for all $\theta, \theta'\in\bbR^p$.
\end{enumerate}
These two conditions are widely used in the theoretic analysis of the SGLD \citep{raginsky2017non, chen2020accelerating, deng2020non}. Now we are ready to state our first convergence results.

\begin{theorem}\label{thm:nonconvex-results}
Let $\widetilde \beta_{k, \eta} = (\widetilde \theta_{k, \eta}^{(1)}, \widetilde \theta_{k, \eta}^{(2)})$ be obtained from \eqref{eq:adj-multi-reSGLD} and $\mu_k$ be distribution of it. 
Denote $\varphi_k^{(1)} \coloneqq \varphi(\widetilde \theta^{(1)}_{k, \eta})$ and $\varphi_k^{(2)} \coloneqq \varphi(\widetilde \theta^{(2)}_{k, \eta})$, and the blocker matrix $\varphi_k \coloneqq \diag(\varphi_k^{(1)}, \varphi_k^{(2)})$. Let 
\[\psi_k = \widetilde S(\widetilde \theta_{k, \eta}^{(1)}, \widetilde \theta_{k, \eta}^{(2)}) - S(\widetilde \theta_{k, \eta}^{(1)}, \widetilde \theta_{k, \eta}^{(2)})\]
be the error due to the estimation of the swap rate. In addition, let $\pi_{\rm{re}}$ be the stationary distribution of reLD defined in \eqref{eq:stationary-dis}.
Given the dissipativity \ref{cond3} and smoothness \ref{cond2} assumptions, we have
\begin{align*}
W_2(\mu_k, \pi_{\rm{re}}) &\leq \widetilde C_0 \exp\left\{-\frac{k\eta(1 + C_{\bbD})}{C_{\cE}}\right\} \\
&\quad  + \widetilde C_{\tau_1, \tau_2, a, M, p}\biggl(\eta^{1/2} + \max_i \trace(\varphi_i\varphi_i^\top)^{1 / 2} + \max_i(\bbE[|\psi_i|^2])^{1 / 2}\biggr),    
\end{align*}
where $C_{\cE} > 0$ is constant, $\widetilde C_0 = \sqrt{2 C_{\cE} \cE(\mu_0 | \pi)}$, \[C_{\bbD} = \inf_{t > 0}\frac{\bbD\left(\frac{d\nu_k}{d\pi}\right)}{\bbD_S\left(\frac{d\nu_k}{d\pi}\right)} - 1\]
is a non-negative constant depending on the swapping rate $S$ and $\widetilde C_{\tau_1, \tau_2, a, M, p}$ is a constant depends on $\tau_1$, $\tau_2$, $a$, $M$ and dimension $p$.
\end{theorem}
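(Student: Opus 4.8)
The plan is to follow the coupling-plus-Dirichlet-form strategy of \citet{deng2020non}, but carefully tracking the extra error terms contributed by (i) the gradient-covariance estimation residual $\varphi_k$ and (ii) the swap-rate estimation error $\psi_k$. First I would introduce, as in \citet{chen2020accelerating, deng2020non}, the continuous-time reLD process $(\theta^{(1)}_t, \theta^{(2)}_t)$ with the exact swap rate $S$, whose law $\nu_t$ converges to $\pi_{\rm re}$ geometrically in relative entropy via a logarithmic Sobolev inequality for $\pi_{\rm re}$; the dissipativity and smoothness assumptions \ref{cond3}, \ref{cond2} guarantee this LSI, and the swap operator only helps (this is exactly where the factor $1 + C_{\bbD}$ enters: the modified Dirichlet form $\bbD_S$ dominates $\bbD$, accelerating the exponential decay). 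Concretely, I would cite or re-derive the estimate $W_2(\nu_{k\eta}, \pi_{\rm re}) \le \sqrt{2 C_{\cE}\cE(\mu_0\mid\pi)}\exp\{-k\eta(1+C_{\bbD})/C_{\cE}\}$, which supplies the first term on the right-hand side.

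Next I would decompose $W_2(\mu_k, \pi_{\rm re}) \le W_2(\mu_k, \nu_{k\eta}) + W_2(\nu_{k\eta}, \pi_{\rm re})$ and bound the first summand by comparing the discrete adjusted-SGLD update \eqref{eq:adj-multi-reSGLD} with the continuous reLD. Here the key observation is \eqref{eq:adj-update}: the adjusted update is exactly an Euler–Maruyama discretization of reLD \emph{plus} the spurious drift-noise term $-\eta_k\varphi(\widetilde\theta_{k,\eta})\zeta_k$. So I would build a synchronous coupling of $\widetilde\beta_{k,\eta}$ and the continuous process, run a one-step Grönwall-type contraction estimate exploiting $M$-Lipschitzness of $\nabla U$ and the dissipativity (to control moments of the iterates uniformly in $k$, as in \citet{raginsky2017non}), and collect the per-step errors. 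Standard terms give the $O(\eta^{1/2})$ discretization error; the new noise term contributes, after summing the geometric series, a bound of order $\max_i \trace(\varphi_i\varphi_i^\top)^{1/2}$ since $\bbE[\|\varphi\zeta\|_2^2] = \trace(\varphi\varphi^\top)$. The swap-rate mismatch is handled by a perturbation argument on the generator: replacing $S$ by $\widetilde S$ in the swap term changes the infinitesimal generator by an amount controlled in $L^2(\pi_{\rm re})$ by $\bbE[|\psi_k|^2]^{1/2}$, and propagating this through the Wasserstein/relative-entropy machinery (e.g. a Girsanov- or Duhamel-type comparison, as in the reSGLD analysis) yields the $\max_i(\bbE[|\psi_i|^2])^{1/2}$ term. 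Folding the $\tau_1,\tau_2,a,M,p$-dependence of all the Grönwall and moment constants into a single $\widetilde C_{\tau_1,\tau_2,a,M,p}$ finishes the second term.

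The main obstacle I anticipate is the swap-rate perturbation step: unlike in single-chain SGLD, the swap introduces a non-local jump term into the generator, and one must show that perturbing its intensity from $S$ to $\widetilde S$ (a multiplicative factor $\psi_k/S$ that can be large when $S$ is small) still yields a controllable $L^2$ error — the natural route is to bound $\bbE[|\psi_k|^2]$ directly rather than a relative error, using that $\widehat U_1, \widehat U_2$ are Gaussian so that $\psi_k = \widetilde S - S$ has explicit moments in terms of $\sigma_1^2, \sigma_2^2$, and that the iterates have sub-Gaussian-type tails by dissipativity. A secondary technical point is ensuring the moment bounds on $\widetilde\theta^{(i)}_{k,\eta}$ hold uniformly despite the extra $\varphi\zeta$ noise: since $\trace(\varphi\varphi^\top) = o(\trace(ss^\top))$ by \eqref{eq:var-est} and $ss^\top$ is already absorbed into the temperature in \eqref{eq:def-c}, this noise is dominated and the standard dissipativity-based drift argument goes through with only constant-factor changes. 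Everything else — the LSI, the contraction of the coupled Euler scheme, the geometric summation — is routine and parallels \citet[Theorem 1]{deng2020non}, the improvement being that the dominant gradient-estimation error there is here replaced by the smaller quantity $\max_i\trace(\varphi_i\varphi_i^\top)^{1/2}$.
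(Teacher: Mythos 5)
Your proposal follows essentially the same route as the paper: the triangle-inequality split $W_2(\mu_k,\pi_{\mathrm{re}}) \le W_2(\mu_k,\nu_k)+W_2(\nu_k,\pi_{\mathrm{re}})$, the exponential decay of the continuous-time reLD imported from \citet{deng2020non} (the Dirichlet-form acceleration supplying the factor $1+C_{\bbD}$), and a synchronous coupling plus Gr\"{o}nwall argument with uniform moment bounds for the discretization term, where the extra noise contributes $\max_i \trace(\varphi_i\varphi_i^\top)^{1/2}$ exactly as you describe. The only divergence is your proposed generator/Girsanov-type perturbation for the swap-rate mismatch: the paper instead stays inside the same coupling and bounds $\bbE\bigl[\|\int(\Sigma_s-\widetilde\Sigma_{\lfloor s/\eta\rfloor,\eta})\,dW_s\|_2^2\bigr]$ by It\^{o} isometry through the mismatch probability of order $a\eta\,\bbE[|\psi_k|]$, a more elementary step that produces the same type of error term.
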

\begin{proof}
The idea is by replacing $\phi_k$ in Theorem 1 of \citet{deng2020non} with $\varphi_k\boldsymbol{\zeta}_k$, where $\boldsymbol{\zeta}_k \sim \cN(0, I_{2p})$. We give proof details in \ref{appe:nonconvex}.
\end{proof}
Compared our Theorem \ref{thm:nonconvex-results} with Theorem 1 in \citet{deng2020non}, the error due to the gradient estimation, which is the dominant error \citep{teh2016consistency}, is significantly reduced. More specifically, denote $s_k^{(1)} \coloneqq s(\widetilde \theta^{(1)}_{k, \eta})$ and $s_k^{(2)} \coloneqq s(\widetilde \theta^{(2)}_{k, \eta})$, and the blocker matrix $s_k = \diag(s_k^{(1)}, s_k^{(2)})$. Under our setting that $\widehat{\nabla U}(\beta) \sim \cN(\nabla U(\beta), s(\beta)s(\beta)^\top)$, the error due to the estimation of gradient in \citet{deng2020non} was 
\[\max_i \trace\left(s_i s_i^\top\right)^{1/2}\]
while in our method is 
\[\max_i \trace\left(\varphi_i\varphi_i^\top\right)^{1/2}.\]
By \eqref{eq:var-est}, our method greatly reduces the error due to the estimation of the gradient.

The replica exchange method is typically employed to solve non-convex problems with multiple local modes. However, it can also be utilized to solve convex issues, especially when an adequate initialization cannot be supplied. We also provide a result of convergence when the energy function $U(\theta)$ is strongly convex.

\begin{enumerate}\addtocounter{enumi}{2}
\item Strongly convex assumption. \label{cond1} The energy function $U$ is $m$-strongly convex, that is 
\[U(\theta) - U(\theta') - \nabla U(\theta)^{\T} (\theta - \theta') \geq \frac{m}{2}\|\theta - \theta'\|_2^2\]
for all $\theta, \theta'\in\bbR^p$.
\end{enumerate}

When the energy function is strongly convex, we have the following estimation.
\begin{theorem}\label{thm:convex-results}
Let $\widetilde \beta_{k, \eta}$, $\varphi_k$, $\psi_k$ and $\pi_{\mathrm{re}}$ be the same as Theorem \ref{thm:nonconvex-results}.
Under the strongly convex \ref{cond1} and smoothness \ref{cond2} assumptions,  if $\eta < 1 / (m + M)$, we have
\begin{align*}
W_2(\mu_k, \pi_{\rm{re}}) & \leq  (1 - m\eta)^k W_2(\mu_0, \pi) \\
&\quad + C_{\tau_1, \tau_2, a, m, M}\biggl\{\eta \max_i \trace(\varphi_i\varphi_i^\top) + \max_i p\bbE[|\psi_i|] + \eta p\biggr\}^{1/2},    
\end{align*}
where $C_{\tau_1, \tau_2, a, m, M}$ is a constant depends on $\tau_1$, $\tau_2$, $a$, $m$ and $M$.
\end{theorem}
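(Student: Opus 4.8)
The plan is to run the proof in parallel with that of Theorem~\ref{thm:nonconvex-results} (equivalently, Theorem~1 of \citet{deng2020non}), but with the non-convex convergence mechanism — exponential decay of relative entropy through the Dirichlet forms $\bbD,\bbD_S$ and a modified log-Sobolev inequality — replaced by the classical one-step contraction available for strongly convex potentials. First I would rewrite the update \eqref{eq:adj-multi-reSGLD} in the form \eqref{eq:adj-update}: block-wise, $\widetilde\theta_{k+1,\eta}^{(i)} = \widetilde\theta_{k,\eta}^{(i)} - \eta_k\nabla U(\widetilde\theta_{k,\eta}^{(i)}) - \eta_k\varphi(\widetilde\theta_{k,\eta}^{(i)})\zeta_k^{(i)} + \sqrt{2\tau_i\eta_k}\,\xi_k^{(i)}$, i.e.\ an exact Euler step of reLD at temperatures $(\tau_1,\tau_2)$ plus an independent Gaussian perturbation with covariance $\eta_k^2\varphi_k^{(i)}(\varphi_k^{(i)})^\top$, together with the swap move of probability $a\eta_k\min\{1,\widetilde S\}$. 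I would also record that $m$-strong convexity together with $M$-smoothness implies $(\alpha,\beta)$-dissipativity, so the uniform-in-$k$ moment bounds on $\widetilde\theta_{k,\eta}^{(i)}$ used in the non-convex analysis remain available; these are needed to control the swap term.

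The core is a one-step estimate obtained by synchronously coupling the transition $\widetilde\beta_{k,\eta}\mapsto\widetilde\beta_{k+1,\eta}$ with a stationary copy of the continuous-time reLD, $\beta_{k\eta}\sim\pi_{\rm re}\mapsto\beta_{(k+1)\eta}\sim\pi_{\rm re}$, using the same driving noise and, on the appropriate events, the same swap decision. Under $\eta<1/(m+M)$ the gradient map $\theta\mapsto\theta-\eta\nabla U(\theta)$ is $(1-m\eta)$-Lipschitz, which yields $W_2(\mu_{k+1},\pi_{\rm re})^2 \le (1-m\eta)\,W_2(\mu_k,\pi_{\rm re})^2 + R_k$, where, after a Young-type splitting that keeps the three sources entering $R_k$ linearly (rather than with an extra $W_2(\mu_k,\pi_{\rm re})$ factor), $R_k$ collects: (i) the Euler-vs-SDE discretization error over one step, of order $\eta^2 p$ via smoothness and the moment bounds; (ii) the extra noise, contributing $\tfrac12\eta_k^2\,\trace(\varphi_k^{(i)}(\varphi_k^{(i)})^\top)$ by independence of $\zeta_k$; and (iii) the discrepancy between the continuous swap and the discrete swap. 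An equally valid route is to insert an intermediate ``idealized'' chain (exact gradients, exact swap rate $S$), bound its distance to $\pi_{\rm re}$ by known reLD convergence estimates, and bound $\mu_k$ to it by a coupling that tracks only the perturbations $\varphi_k\zeta_k$ and $\psi_k$; I would use whichever keeps constants cleanest.

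Term (iii) is where the real work lies and is the main obstacle. It splits into a ``clock'' mismatch — a Poisson swap of rate $a\min\{1,S\}$ over an interval of length $\eta$ versus a Bernoulli swap of probability $a\eta\min\{1,S\}$ — which is $O(\eta^2)$ times a squared-distance factor and hence harmless after taking expectations and invoking the moment bounds; and an estimation part, caused by using $\widetilde S$ in place of $S$, governed by $\bbE\big[\,|\psi_k|\,\|\widetilde\theta_{k,\eta}^{(1)}-\widetilde\theta_{k,\eta}^{(2)}\|^2\big]$, which by Cauchy--Schwarz and the uniform fourth-moment bound is $\lesssim p\,\bbE[|\psi_k|]$ up to a constant in $\tau_1,\tau_2,a$. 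One must also match the $\min\{1,\cdot\}$ truncations of $\widetilde S$ and $S$. The subtlety is that $\pi_{\rm re}$ is not exchangeable (the temperatures differ), so the swap cannot be absorbed by symmetrizing the coupling; one instead relies on $\pi_{\rm re}$ being invariant under the \emph{full} continuous dynamics, swap included, so the stationary reference genuinely returns to $\pi_{\rm re}$ after the step.

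Finally I would iterate the recursion. Since $\sum_{j=0}^{k-1}(1-m\eta)^j \le (m\eta)^{-1}$, summing $R_k$ turns the per-step $O(\eta^2 p)$ discretization term into $O(\eta p)$, the per-step $O(\eta^2\,\trace(\varphi_k\varphi_k^\top))$ term into $O(\eta\max_i\trace(\varphi_i\varphi_i^\top))$, and the per-step $O(\eta\,\bbE[|\psi_k|]\,p)$ swap-estimation term into $O(p\max_i\bbE[|\psi_i|])$; taking square roots and folding all remaining constants into $C_{\tau_1,\tau_2,a,m,M}$ gives precisely the stated bound, with the homogeneous part of the recursion producing the leading term $(1-m\eta)^k W_2(\mu_0,\pi)$. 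The routine pieces — the strong-order discretization estimate, the Gaussian moment identities used for (ii), and the explicit constants — would be deferred to the appendix, in parallel with the proof of Theorem~\ref{thm:nonconvex-results}.
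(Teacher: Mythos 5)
Your overall architecture is the one the paper uses: synchronously couple the discrete chain with a stationary copy of the continuous-time reLD, use the $(1-m\eta)$-Lipschitz property of $\theta\mapsto\theta-\eta\nabla U(\theta)$ under $\eta<1/(m+M)$ for the contraction, collect per-step errors from (i) the one-step Euler-vs-SDE discrepancy, (ii) the injected gradient noise $\eta\varphi_k\zeta_k$ (contributing $\eta^2\trace(\varphi_k\varphi_k^\top)$), and (iii) the swap mismatch, then sum the geometric recursion with $\sum_j(1-m\eta)^j\le(m\eta)^{-1}$. That matches the appendix proof (which follows Dalalyan's Lemmas 1--3) up to bookkeeping.

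The genuine gap is in your term (iii). You treat the swap as a swap of \emph{positions}, so a rate mismatch produces a state-dependent jump and you are led to control $\bbE\bigl[\,|\psi_k|\,\|\widetilde\theta^{(1)}_{k,\eta}-\widetilde\theta^{(2)}_{k,\eta}\|_2^2\bigr]$. Cauchy--Schwarz there gives $p\,(\bbE[|\psi_k|^2])^{1/2}$, not the claimed $p\,\bbE[|\psi_k|]$; since $\bbE[|\psi_k|]\le(\bbE[|\psi_k|^2])^{1/2}$, this proves a strictly weaker bound than the theorem states (the first moment of $\psi$ is exactly what distinguishes Theorem \ref{thm:convex-results} from the $(\bbE[|\psi_i|^2])^{1/2}$ appearing in Theorem \ref{thm:nonconvex-results}), and there is no uniform bound on $\|\widetilde\theta^{(1)}-\widetilde\theta^{(2)}\|_2$ that would let you avoid Cauchy--Schwarz. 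The paper sidesteps this entirely by using the standard device that swaps of positions are equivalent to swaps of temperatures: the swap only toggles the diffusion matrix between $M_1$ and $M_2$, whose entries differ by the \emph{bounded} amount $\sqrt{2\tau_2}-\sqrt{2\tau_1}$, so by It\^{o} isometry the estimation part of the mismatch (the $A_5$ term) is bounded by $\delta_2\, p\,\eta\,\bbE[|\psi_k|]$ directly, with no state-dependent factor; the clock mismatch ($A_4$) similarly gives $\delta_1 p\eta^2$. Your plan becomes correct once you adopt this temperature-swap representation; as written, the route through position swaps and Cauchy--Schwarz does not deliver the stated $\max_i p\,\bbE[|\psi_i|]$ term. (A minor additional remark: the paper does not need uniform moment bounds on the discrete chain in this proof --- the drift-continuity term $A_2$ is bounded along the stationary process via $\bbE\|\nabla U\|_2^2\le Mp$ under $\pi_{\rm re}$ --- so the dissipativity detour you invoke is unnecessary here, though harmless.)
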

\begin{proof}
We defer the proof to \ref{appe:convex}.
\end{proof}
Compared to Theorem \ref{thm:nonconvex-results} under the non-convex setting, thanks to strongly convex, in Theorem \ref{thm:convex-results}, as the step size $\eta$ goes to $0$, the bias due to the estimation of the gradient will vanish, only the bias due to the estimation of swapping rate is left. The main idea of the proof is similar to Theorem 4 in \cite{dalalyan2019user}, but we reduce the error due to the estimation of gradient from $\max_i \trace\left(s_i s_i^\top\right)^{1/2}$ to $\max_i \trace\left(\varphi_i\varphi_i^\top\right)^{1/2}$.

\section{Experiments}\label{sec_exp}
We would like to give several numeric examples to show the benefits of our method.

\subsection{Gaussian mixture distribution simulations}
In this section, we evaluate our method for Gaussian mixture distribution. Let high temperature $\tau_2 = 10$ and low temperature $\tau_1 = 1$, then the samples sampled from low temperature follow the target distribution that we need. Consider the Gaussian mixture distribution
\begin{equation}\label{eq:gauss-mixt}
\exp(-U(\theta)) = 0.4\cN(\theta;-4, 0.7^2) + 0.6  \cN(\theta;3, 0.5^2).    
\end{equation}
We assume that in the low-temperature chain, we can only obtain an unbiased estimator $\widehat U_1(\theta) \sim \cN(U(\theta), 1^2)$, and in the high-temperature chain, we can only access $\widehat U_2(\theta) \sim \cN(U(\theta), 3^2)$. For the gradient, we also only access the noised version, where $ \widehat{\nabla U}_1(\theta)\sim \cN(\nabla U(\theta), 2^2)$ for the low temperature and $\widehat{\nabla U}_2(\theta)\sim \cN(\nabla U(\theta), 5^2)$ for the high temperature. We fixed the step size as $0.03$ in our updating step. Since we don't know the variance value in practice, to implement our method, we use the same method in \citet{deng2020non} to estimate the variance. More specifically, in each state $\widetilde \theta_{k, \eta}^{(l)}$, we get the sample variance $\widehat s^2(\widetilde \theta_{k, \eta}^{(l)})$ and update our variance estimator by
\[\widehat s_k^2 = (1 - 1 / k) \widehat s_{k - 1}^2 + (1 / k) \widehat s^2(\widetilde \theta_{k, \eta}^{(l)}),\]
where $\widehat s_k^2$ is our variance estimator in the $k$-th step, and $l$ can be $1$ for the low temperature and $2$ for the high-temperature samples. The same method was used to estimate $\sigma_1^2$ and $\sigma_2^2$, the variance of the estimator of the energy function.

\begin{figure}
\vspace{.3in}
    \centering
    \includegraphics[scale = 0.45]{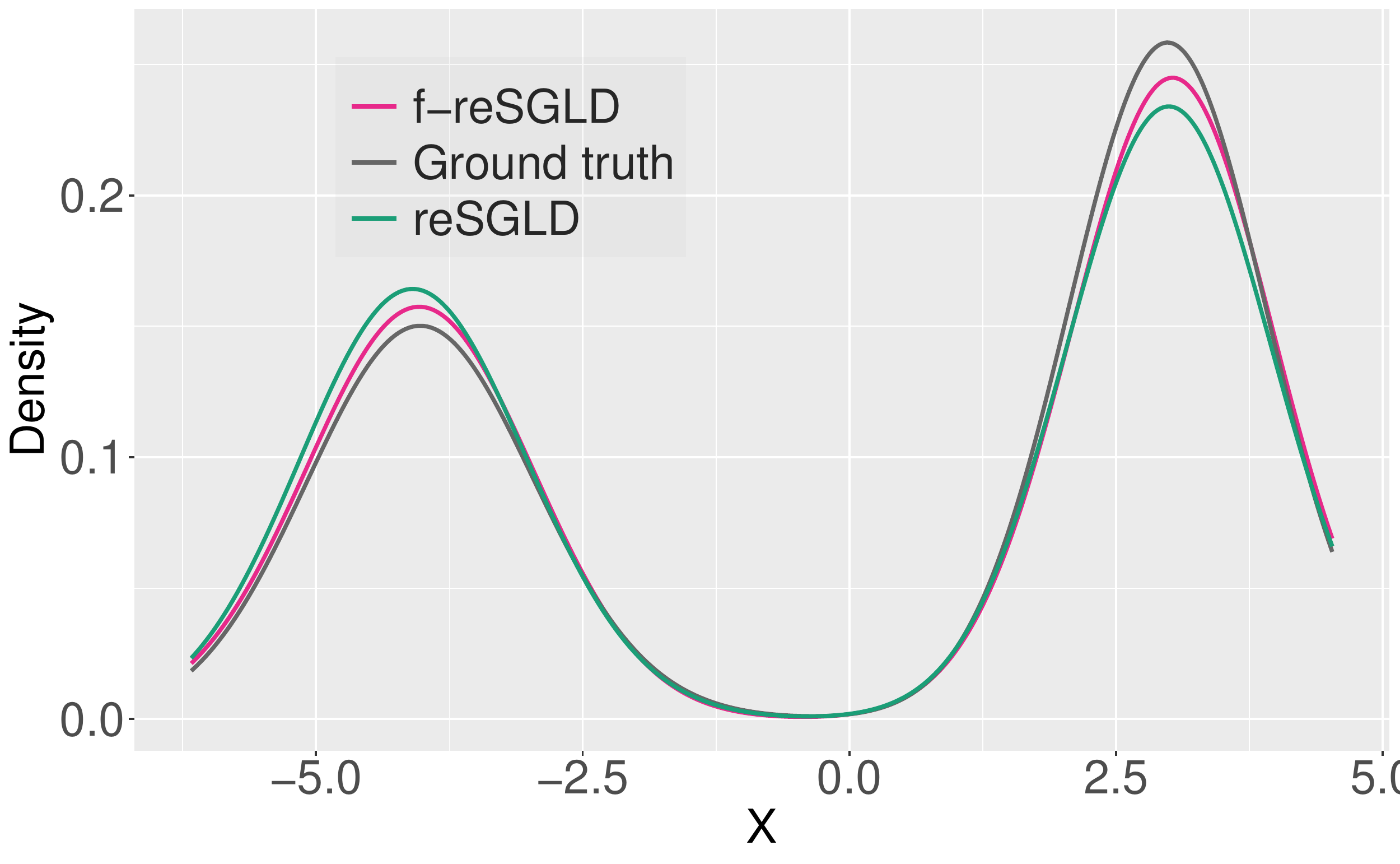}
\vspace{.3in}
    \caption{Gaussian mixture example where the variance of observation noise is fixed.}
    \label{fig:gauss-mix-fix}
\end{figure}

Figure \ref{fig:gauss-mix-fix} shows the density plot of $1000$ samples in this setting, where the black line is the ground truth, the green line is from the basic reSGLD method, and the red line is from our method. It is easy to see that our method fits better than the reSGLD method, which is closer to the ground truth. 

\begin{figure}
\vspace{.3in}
    \centering
    \includegraphics[scale = 0.45]{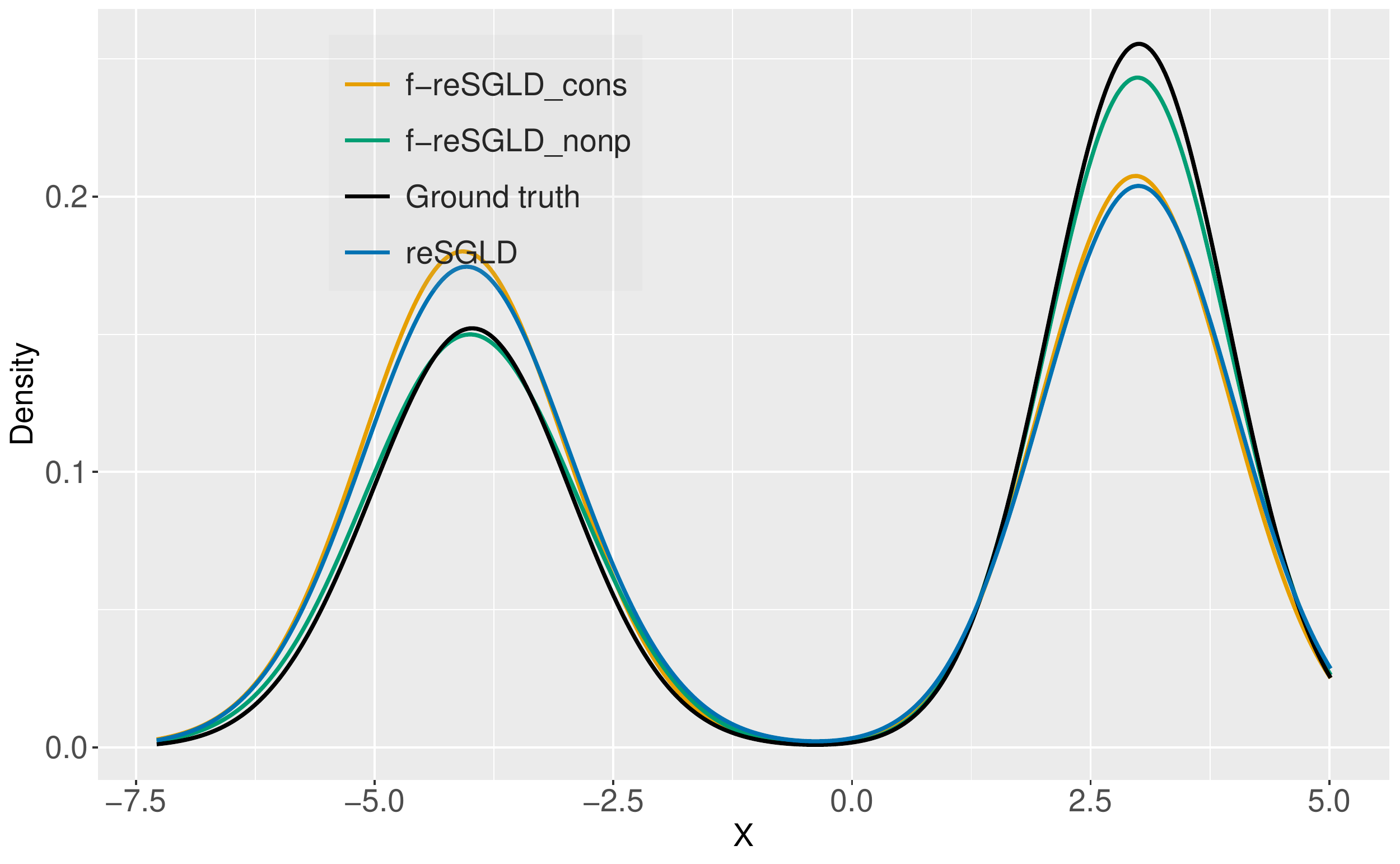}
\vspace{.3in}
    \caption{Gaussian mixture example where the variance of observation noise is fixed.}
    \label{fig:gauss-mix-flex}
\end{figure}

Under the same Gaussian mixture distribution given in \eqref{eq:gauss-mixt}, we next consider a more general case that the variance of error $s^2(\theta)$ depends on the state $\theta$. In our simulation, we assume
\[s(\theta) = \frac{5\exp(\theta)}{1 + \exp(\theta)}, \text{and }\sigma(\theta) = \frac{3\exp(U(x))}{2(1 + \exp(U(x)))},\]
where $\widehat U(\theta) \sim \cN(U(\theta), \sigma^2(\theta))$ and $\widehat{\nabla U(\theta)} \sim \cN(\nabla U(\theta), s^2(\theta))$.We tried two different methods to estimate the variance. In the first method, we assume that the variance is a constant, and we use the same method in \citet{deng2020non} described above. We refer to this method as ``f-reSGLD\_cons''. In the second method, we used a non-parametric method to get an estimator of variance,  which is denoted as ``f-reSGLD\_nonp''. More specifically, we collected first $100$ samples $\{\widetilde \theta_i^{(2)}\}_{i = 1}^{100}$ in the high-temperature chain with their sample variance $\{\widehat s^2(\widetilde \theta_i^{(2)})\}_{i = 1}^{100}$. Next, we fit a kernel ridge regression (KRR) \citet{wainwright2019high} to get an estimator $\widehat \cF$ of variance. Then, in the following updates, for each state $\widetilde \theta$, we calculate its variance via $\widehat \cF(\widetilde \theta)$.

Figure \ref{fig:gauss-mix-flex} shows results for the non-constant variance case, where the black line is the ground truth, the blue line is the standard reSGLD, the yellow line is our method where we estimate variance as the constant, and the green line is f-reSGLD method where estimating variance via KRR. The performance of reSGLD and f-reSGLD\_cons is similar, while f-reSGLD\_nonp performs much better than those two, which is closer to the ground truth. This implies that the better estimator of the variance we get, the better the performance of our method.

\subsection{Inverse PDE}\label{sec:pde}
We next present an inverse PDE example. This example is to demonstrate that the proposed sampling method can capture multi-mode inverse quantities of interest (iQoI) with large noise in the likelihood functions and the gradient estimations. In particular, we show that the proposed method can reach the same accuracy and effectiveness (the number of samplings) as reSGLD with much lower injected noise, while the method without the noise correction cannot capture all iQoI.

We design the problem so that there are an infinite number of iQoI. The inverse PDE relies on the following model equation,
\begin{align*}
    & u_t = \nabla\cdot \nabla u +f, x\in \Omega = [0, 1]^2,, t\in[0, T]\\
    & u(x, t) = 0, x\in\partial\Omega,\\
    & u(x, 0) = \beta e^{-(x-x_0)^2/\alpha}.
\end{align*}
The initial condition is unknown, or $x_0$ is unknown. 
The target is to track $x_0$ given the measurement $u(x, t)$ of a single sensor at the terminal time $T$ and location $x_s$.
We set the exact solution to $u(x, t) = \beta e^{-(x-x_0)^2/\alpha}e^{-t}$, if we place only one sensor at $x_s$, the iQoI will be a circle centered in $x_s$.
In this work, $\beta = 1/(2\pi h^2)$, $\alpha = 2h^2$, where $h = 0.1$. $T = 0.03$ and the sensor is placed at $(0.3, 0.5)$.

We perform three sets of experiments: vanilla reSGLD with small noise (s-reSGLD), fast reSGLD (f-reSGLD) with large noise, and vanilla reSGLD with large noise (l-reSGLD).
We manually inject noise into the likelihood functions and the gradient estimations. Specifically, we inject a Gaussian noise $\mathcal{N}(0, 0.1^2)$ to the likelihood functions of the s-reSGLD while injecting a Gaussian noise $\mathcal{N}(0, 0.8^2)$ to the likelihood functions of 
f-reSGLD and l-reSGLD. For the gradient of the likelihood functions, we inject a Gaussian noise $\mathcal{N}(0, 0.1^2)$ for s-reSGLD while injecting a Gaussian noise $\mathcal{N}(0, 2^2)$ for f-reSGLD and l-reSGLD. In addition, in all three experiments, the low temperature is equal to 0.08 and the high temperature is equal to 0.5, the effective temperatures of the proposed method are then derived accordingly. The results are shown in Figures \ref{exp1_12000} and \ref{exp1_48000}.

\begin{figure}
\centering
\includegraphics[scale = 0.35]{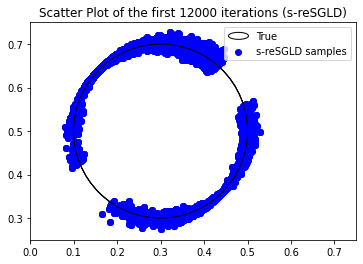}
\includegraphics[scale =
0.35]{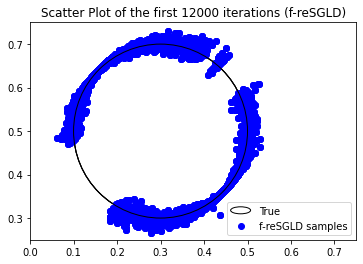}
\includegraphics[scale =
0.35]
{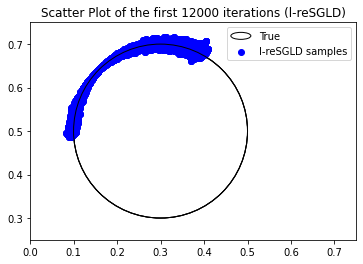}
\caption{The first 12,000 samples of all three methods. Left: reSGLD with small injected noise (s-reSGLD). Middle: the proposed method with large injected noise (f-reSGLD). Right: reSGLD with the same large injected noise (l-reSGLD) as the proposed method (f-reSGLD).}
\label{exp1_12000}
\end{figure}

\begin{figure}
\centering
\includegraphics[scale = 0.35]{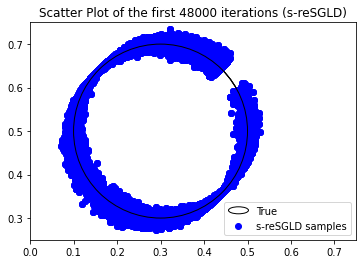}
\includegraphics[scale =
0.35]{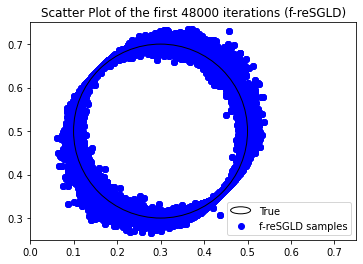}
\includegraphics[scale =
0.35]{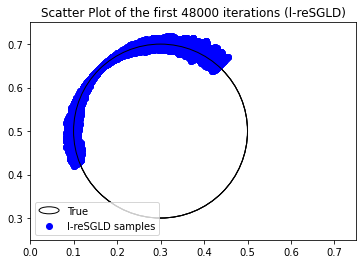}
\caption{The first 48,000 samples of all three methods. Left: reSGLD with small injected noise (s-reSGLD). Middle: the proposed method with large injected noise (f-reSGLD). Right: reSGLD with the same large injected noise (l-reSGLD) as the proposed method (f-reSGLD).}
\label{exp1_48000}
\end{figure}

From Figures \ref{exp1_12000} and \ref{exp1_48000}, we can observe that the proposed method with large noises in the likelihood function and the gradient of the likelihood function can capture the true iQoI, 
however, the vanilla reSGLD with the same noises cannot capture all iQoI.

\section{Conclusion}\label{sec_con}
In this work, we propose a method that can lower the accuracy requirement of the energy function estimator while preserving convergence. More specifically, our estimation reduces the error of the gradient estimation compared with the vanilla reSGLD. In addition, we also present another estimation of convergence under the strongly convex assumption in Theorem~
\ref{thm:convex-results}. Compared to the non-convex setting, the gradient error will vanish as the learning rate goes to zero. Moreover, we propose a new swap rate estimator and prove it is unbiased. Compared to the vanilla swapping rate in m-reSGLD, our swapping rate does not require extra likelihood evaluation and is much faster. Several numerical examples show that the method can tolerate the noise in the gradient estimation and achieve the same convergence, but the vanilla m-reSGLD may not converge. Our future work will be on how to get a tighter bound when the convex assumption is removed.

\section*{Acknowledgments} 
GL and ZZ gratefully acknowledge the support of the National Science Foundation (DMS-1555072, DMS-2053746, and DMS-2134209), Brookhaven National Laboratory Subcontract 382247, and U.S. Department of Energy (DOE) Office of Science Advanced Scientific Computing Research program DE-SC0021142 and DE-SC0023161.

\bibliographystyle{elsarticle-harv} 
\bibliography{references}

\appendix
 
\section{Proof of Theorem \ref{thm:convex-results}}\label{appe:convex}
Since the swaps of the positions are equivalent to swaps of the temperatures \citep{dupuis2012infinite, chen2020accelerating}, we can model reLD by the following stochastic differential equation:
\[d\beta_t = -\nabla G(\beta_t)d t + \Sigma_t d W_t,\]
where $\beta_{t} = (\theta_{t}^{(1)}, \theta_{t}^{(2)})$, $G(\beta_t) = [U(\theta_t^{(1)}), U(\theta_t^{(2)})]$, $W_t\in\bbR^{2p}$ is Brownian motion, and $\Sigma_t$ is a random matrix that swaps between the diagonal matrices 
\[M_1 = \begin{bmatrix}\sqrt{2\tau_1 I_p} & 0 \\ 0 & \sqrt{2\tau_2I_p}\end{bmatrix} \text{ and }M_2 = \begin{bmatrix}\sqrt{2\tau_2I_p} & 0 \\ 0 & \sqrt{2\tau_1 I_p}\end{bmatrix}\]
with probability $a S(\theta_t^{(1)}, \theta_t^{(2)})dt$, $I_p$ is $p$-dimensional identity matrix.

Recall that our update procedure is 
\[\widetilde \beta_{k+1, \eta} = \widetilde \beta_{k, \eta} - \eta\nabla \widetilde G(\widetilde \beta_{k, \eta}) + \sqrt{\eta} \widetilde \Sigma_{k, \eta} \xi_k,\]
where $\widetilde G(\widetilde \beta_{k, \eta}) = [\widetilde U(\widetilde \theta_{k, \eta}^{(1)}), \widetilde U(\widetilde \theta_{k, \eta}^{(2)})]$ and $\widetilde \Sigma_{k, \eta}$ is a random matrix that swaps between diagonal matrix $M_1$ and $M_2$ with probability $a\eta\widetilde S(\widetilde \theta_{k, \eta}^{(1)}, \widetilde \theta_{k, \eta}^{(2)})$. Without loss of generality, we assume that our initialized point is $\widetilde \beta_{0, \eta} \sim \mu_0$. Recall that $\mu_k$ is the distribution of $\widetilde \beta_{k, \eta}$. To later use, we define the denoised update by
\[\beta_{k + 1, \eta} = \beta_{k, \eta} - \eta \nabla G(\beta_{k, \eta}) + \sqrt{\eta}\Sigma_{k, \eta}\xi_k,\]
where $\Sigma_{k, \eta}$ is a random matrix that swaps between $M_1$ and $M_2$ with probability $a\eta S(\theta^{(1)}_{k, \eta}, \theta^{(2)}_{k, \eta})$.

\begin{proof}
Let $L_0$ be a random vector drawn from $\pi_{\rm{re}}$ such that $W_2(\mu_0, \pi_{\rm{re}}) = \bbE[\|\widetilde \beta_{0, \eta} - L_0\|_2]$. We define the stochastic process $\{L_t\}_{t\geq 0}$ by
\[L_t = L_0 - \int_0^t \nabla G(L_s)\, ds + \int_0^t \Sigma_s\,dW_s.\]
Since $\pi_{\rm{re}}$ is the stationary distribution, we know that $L_t\sim\pi_{\rm{re}}$ for all $t \geq 0$. Observe that
\[L_{(k+1)\eta} = L_{k\eta} - \int_{k\eta}^{(k+1)\eta}\nabla G(L_s)\,ds + \int_{k\eta}^{(k+1)\eta}\Sigma_s\,dW_s.\]

Denote $\Delta_k = L_{k\eta} - \widetilde \beta_{k, \eta}$, then
\begin{align}\label{eq:decom-A}
\begin{split}
\Delta_{k+1} &= L_{k\eta} - \int_{k\eta}^{(k+1)\eta}\nabla G(L_s)\,ds + \int_{k\eta}^{(k+1)\eta}\Sigma_s\,dW_s \\
&\qquad - (\widetilde \beta_{k, \eta} - \eta\nabla \widetilde G(\widetilde \beta_{k, \eta}) + \sqrt{\eta} \widetilde \Sigma_{k, \eta} \xi_k)\\
&= \Delta_k + \eta\nabla \widetilde G(\widetilde \beta_{k, \eta})- \int_{k\eta}^{(k+1)\eta}\nabla G(L_s)\,ds + \int_{k\eta}^{(k+1)\eta}(\Sigma_s - \widetilde \Sigma_{k,\eta})\,dW_s\\
&= \Delta_k  - \eta(\underbrace{\nabla G(L_{k\eta}) - \nabla G(\widetilde \beta_{k, \eta})}_{\coloneqq A_1}) - \underbrace{\int_{k\eta}^{(k+1)\eta} (\nabla G(L_s) - \nabla G(L_{k\eta}))\,ds}_{\coloneqq A_2} \\
& \qquad  + \eta\underbrace{(\nabla \widetilde G(\widetilde \beta_{k, \eta}) - \nabla G(\widetilde \beta_{k, \eta}))}_{\coloneqq A_3} \\
& \qquad + \underbrace{\int_{k\eta}^{(k+1)\eta}(\Sigma_s - \Sigma_{k,\eta})\,dW_s}_{\coloneqq A_4} + \underbrace{\int_{k\eta}^{(k+1)\eta}(\Sigma_{k, \eta} - \widetilde \Sigma_{k,\eta})\,dW_s}_{\coloneqq A_5}.    
\end{split}
\end{align}

Notice that $A_3 = (\varphi_k^{(1)}\zeta_k^{(1)}, \varphi_k^{(2)}\zeta_k^{(2)})$, where $\zeta_k^{(1)}, \zeta_k^{(2)}\sim \cN(0, I_p)$. By the definition of $\varphi_k^{(1)}$ and $\varphi_k^{(2)}$, $A_3$ is independent with all other terms given $\widetilde \beta_{k, \eta}$ and $\bbE[A_3 |\widetilde \beta_{k, \eta}] = 0$. Furthermore, we have
\begin{equation}\label{eq:bound-A3}
\bbE[\|A_3\|_2^2] = \trace(\psi_k\psi_k^\top).
\end{equation}

Observe that $A_4$ is also independent of all other terms given $\widetilde \beta_{k, \eta}$, and by the definition of It\^{o} integral, we know $\bbE[A_4 | \widetilde \beta_{k, \eta}] = 0$. According to It\^{o} isometry, we have
\begin{align*}
&\bbE\left[\left\Vert \int_{k\eta}^{(k+1)\eta}(\Sigma_s - \Sigma_{k,\eta})\,dW_s\right\Vert_2^2\right]\\
=&\sum_{j = 1}^{2p}\int_{k\eta}^{(k+1)\eta}\bbE[(\Sigma_s(j) -  \Sigma_{k,\eta}(j))^2]\, ds\\
=& \sum_{j = 1}^{2p} 2(\sqrt{\tau_2} - \sqrt{\tau_1})^2 \int_{k\eta}^{(k+1)\eta}P(\Sigma_s(j) \neq \Sigma_{k, \eta}(j))\,ds.
\end{align*}
Following the discussions in \citet{chen2020accelerating, deng2020non}, by the conditional probability 
\[P(\Sigma_s(j) \neq \Sigma_{k, \eta}(j) | \widetilde \beta_{k, \eta}) = a S(\widetilde \theta_{k,\eta}^{(1)}, \widetilde \theta_{k,\eta}^{(1)})(s - k\eta) + a o(s - k\eta),\]
where $o(s - k\eta)$ is the higher remainder with respect to $s - k\eta$. Hence,
\begin{align}\label{eq:bound-A4}
\begin{split}
\bbE[\|A_4\|_2^2] &= \sum_{j = 1}^{2p} 2(\sqrt{\tau_2} - \sqrt{\tau_1})^2 a \left[\int_{k\eta}^{(k+1)\eta}(s - k\eta) + o(s - k\eta)\,ds\right]\\
& \leq \delta_1(\tau_1, \tau_2, a) p \eta^2,
\end{split}
\end{align}
where $\delta_1(\tau_1, \tau_2, a)$ is a constant depends on $\tau_1$, $\tau_2$ and $a$.

Since $A_5 = \sqrt{\eta}(\Sigma_{k, \eta} - \widetilde \Sigma_{k, \eta})\zeta$, where $\zeta\sim\cN(0, I_{2p})$, then $\bbE[A_5] = 0$. Moreover, the upper bound of $\bbE[\|A_5\|_2^2]$ can be gotten by
\begin{align}\label{eq:bound-A5}
\begin{split}
\bbE[\|A_5\|_2^2] &= \sum_{j = 1}^{2p} 2(\sqrt{\tau_2} - \sqrt{\tau_1}) \int_{k\eta}^{(k+1)\eta}P(\Sigma_{k, \eta}(j) \neq \widetilde \Sigma_{k, \eta}(j))\,ds\\
&= \sum_{j = 1}^{2p} 2(\sqrt{\tau_2} - \sqrt{\tau_1})^2 a\eta\bbE[|S(\widetilde \theta_{k, \eta}^{(1)}, \widetilde \theta_{k, \eta}^{(2)}) - \widetilde S(\widetilde \theta_{k, \eta}^{(1)}, \widetilde \theta_{k, \eta}^{(2)})|]\\
&\leq \delta_2(\tau_1, \tau_2, a) p \eta \bbE[|\psi_k|],
\end{split}
\end{align}
where $\delta_2(\tau_1, \tau_2, a)$ is a constant depends on $\tau_1$, $\tau_2$ and $a$. The bound \eqref{eq:bound-A5} was first gotten by \citet{deng2020non} and we present this again for the convenience of the reader.

Since the expectation of $A_3$ and $A_4$ is equal to 0 given $\widetilde \beta$, and $\bbE[A_5] = 0$, together with the results in \eqref{eq:bound-A3}, \eqref{eq:bound-A4} and \eqref{eq:bound-A5}, we have
\begin{align}\label{eq:decom2}
\begin{split}
\bbE[\|\Delta_{k + 1}\|_2^2] &= \bbE[\|\Delta_k - \eta A_1 - A_2 + \eta A_3 + A_4 + A_5\|_2^2] \\
& = \bbE[\|\Delta_k - \eta A_1 - A_2 \|_2^2] + \eta^2 \bbE[\|A_3\|_2^2] + \bbE[\|A_4\|_2^2] + \bbE[\|A_5\|_2^2]\\
& \leq \left(\bbE[\|\Delta_k - \eta A_1\|_2] + \bbE[\|A_2\|]\right)^2 \\
&\qquad + \eta^2\trace(\psi_k\psi_k^\top) + \delta_1 p\eta^2 + \delta_2 p \eta \bbE[|\psi_k|].
\end{split}
\end{align}


By Lemma 2 in \citet{dalalyan2019user}, since $U$ is $m$-strongly convex and $\nabla U$ is $M$-Lipschitz, and $\eta < 1 / (M + m)$, we have
\[\|\theta - \theta' - \eta(\nabla U(\theta) - \nabla U(\theta'))\|_2 \leq (1 - m\eta) \|\theta - \theta'\|_2.\]
Hence, in our case,
\begin{align*}
& \|\beta - \beta' - \eta(\nabla G(\beta) - \nabla G(\beta'))\|_2^2 \\
= &\|\theta^{(1)} - \theta'^{(1)} - \eta(\nabla U(\theta^{(1)}) - \nabla U(\theta'^{(1)}))\|_2^2\\
&\quad + \|\theta^{(2)} - \theta'^{(2)} - \eta(\nabla U(\theta^{(2)}) - \nabla U(\theta'^{(2)}))\|_2^2\\
\leq &(1 - m\eta)^2\|\theta^{(1)} - \theta'^{(1)}\|_2^2 + (1 - m\eta)^2\|\theta^{(2)} - \theta'^{(2)}\|_2^2\\
= &(1 - m\eta)^2 \|\beta - \beta'\|_2^2.
\end{align*}
Thus,
\begin{align}\label{eq:bound-delta}
\begin{split}
\|\Delta_k - \eta A_1\|_2 =& \|L_{k\eta} - \widetilde \theta_{k, \eta} - \eta(\nabla G(L_{kh}) - \nabla G(\widetilde \theta_{k, h}))\|_2 \\
& \leq (1 - m\eta) \|\Delta_k\|_2.      
\end{split}  
\end{align}

By Lemma 3 in \citet{dalalyan2019user}, since the function $U$ is continuously differentiable, and the gradient of $U$ is Lipschitz with constant $M$, then
\[\bbE[\|\nabla U(\theta)\|_2^2] \leq Mp.\]
For the term $A_2$, because all $L_t$ follows the same distribution, we only need to consider the case when $k = 0$. Then,
\begin{align*}
\bbE[\|A_2(0)\|_2] &\coloneqq \bbE\left[\left\Vert\int_0^{\eta}(\nabla G(L_s) - \nabla G(L_0))\,ds\right\Vert_2\right]\\
&\leq \int_0^{\eta}\bbE[\|\nabla G(L_s) - \nabla G(L_0)\|_2]\,ds\\
&\leq M\int_0^{\eta} \bbE[\|L_s - L_0\|_2]\,ds\\
&= M\int_0^{\eta}\bbE\left[\left\Vert - \int_0^s \nabla G(L_t)\, dt + \int_0^s \Sigma_t\, d W_t\right\Vert_2\right] \,ds\\
&\leq M\underbrace{\int_0^{\eta}\int_0^s \bbE[\|\nabla G(L_t)\|_2]\,dt\,ds}_{\coloneqq B_1} + M\underbrace{\int_0^{\eta}\bbE\left[\left\Vert\int_0^s \Sigma_t \,dW_t\right\Vert_2\right]\,ds}_{\coloneqq B_2}.
\end{align*}
For the term $B_1$, since $L_t$ follows distribution $\pi$ for all $t\geq 0$, due to Lemma 3,
\[\bbE[\|\nabla G(L_t)\|_2^2] = \bbE[\|\nabla G(L_0)\|_2^2] = \bbE[\|\nabla U(L_0^{(1)})\|_2^2] + \bbE[\|\nabla U(L_0^{(2)})\|_2^2] \leq 2 M p.\]
Hence,
\begin{equation}\label{eq:bound-B1}
B_1 \leq \int_0^{\eta}\int_0^s \sqrt{2Mp}\,dt\,ds = \frac{\sqrt{2Mp}\eta^2}{2}.
\end{equation}
Since $\Sigma_t$ is a diagonal matrix with elements $\sqrt{2\tau_1}$ or $\sqrt{2\tau_2}$ for any $t$, we have
\begin{align}\label{eq:bound-B2}
\begin{split}
B_2 \leq \int_0^{\eta} \sqrt{2\tau_2} \bbE[\|W_s\|_2]\,ds \leq \sqrt{2\tau_2} \int_0^{\eta} \sqrt{2ps}\,ds = \frac{4\sqrt{p\tau_2}\eta^{3/2}}{3}.
\end{split}
\end{align}
Combine \eqref{eq:bound-B1} and \eqref{eq:bound-B2}, we have
\[\bbE[\|A_2\|_2] \leq \frac{\sqrt{2}M^{3/2}p^{1/2}\eta^2}{2} + \frac{4Mp^{1/2}\eta^{3/2}}{3}.\]
Due to the assumption that $\eta < 1 / M$, we have
\begin{equation}\label{eq:bound-A2}
\bbE[\|A_2\|_2] \leq \delta_3 M \eta^{3/2} p^{1/2}.   
\end{equation}

Plug \eqref{eq:bound-delta} and \eqref{eq:bound-A2} into \eqref{eq:decom2}, we have
\begin{align*}
\bbE[\|\Delta_{k + 1}\|_2^2] &\leq \left((1 - m\eta)\bbE[\|\Delta_k\|_2] + \delta_3 M \eta^{3/2} p^{1/2} \right)^2 \\
&\quad + \eta^2\max_{i}\trace(\varphi_i\varphi_i^\top) + \delta_1 p\eta^2 + \delta_2 p \eta \max_i \bbE[|\psi_i|].
\end{align*}
Finally, due to Lemma 1 in \citet{dalalyan2019user}, we have
\begin{align*}
&W_2(\mu_k, \pi_{\rm{re}}) \\
\leq &(1 - m\eta)^k W_2(\mu_0, \pi_{\rm{re}})  + \frac{\delta_3 M \eta^{3/2}p^{1/2}}{m\eta} \\
& + \frac{\eta^2\max_{i}\trace(\varphi_i\varphi_i^\top) + \delta_1 p\eta^2 + \delta_2 p \eta \max_i \bbE[|\psi_i|]}{\delta_3 M \eta^{3/2}p^{1/2} + \sqrt{m\eta\left(\eta^2\max_{i}\trace(\varphi_i\varphi_i^\top) + \delta_1 p\eta^2 + \delta_2 p \eta \max_i \bbE[|\psi_i|]\right)}}\\
\leq &(1 - m\eta)^k W_2(\mu_0, \pi_{\rm{re}}) \\
& + C_{\tau_1, \tau_2, a, m, M}\left(\eta\max_i\trace(\varphi_i\varphi_i^\top) + p\eta + p\max_i\bbE[\psi|]\right)^{1/2}.
\end{align*}
\end{proof}

\section{Proof of Theorem \ref{thm:nonconvex-results}}\label{appe:nonconvex}
\begin{proof}
Define
\[\beta_t = \beta_0 - \int_0^t \nabla G(\beta_s)\,ds + \int_0^t \Sigma_s d W_s,\]
where $\beta_t = (\theta_t^{(1)}, \theta_t^{(2)})$, $G(\beta_t) = [U(\theta_t^{(1)}), U(\theta_t^{(2)})]$, $W_t\in\bbR^{2p}$ is Brownian motion, and $\Sigma_t$ is a random matrix that swaps between the diagonal matrices 
\[M_1 = \begin{bmatrix}\sqrt{2\tau_1 I_p} & 0 \\ 0 & \sqrt{2\tau_2 I_p}\end{bmatrix} \text{ and }M_2 = \begin{bmatrix}\sqrt{2\tau_2 I_p} & 0 \\ 0 & \sqrt{2\tau_1 I_p}\end{bmatrix}\]
with probability $a S(\theta_t^{(1)}, \theta_t^{(2)})dt$, $ I_p$ is $p$-dimensional identity matrix. Define $\beta_{k, \eta}\coloneqq \beta_{k\eta}$ and denote the distribution of $\beta_{k, \eta}$ as $\nu_k$. Notice that we can revise our update \eqref{eq:adj-multi-reSGLD} as 
\[\widetilde \beta_{k, \eta} = \widetilde \beta_0 - \int_0^{k\eta} \nabla \widetilde G(\widetilde \beta_{\lfloor s / \eta \rfloor, \eta})\,ds + \int_0^{k\eta} \widetilde \Sigma_{\lfloor s / \eta \rfloor, \eta} d W_s,\]
where $\lfloor x \rfloor$ is the greatest integer less than or equal to $x$. Denote the distribution of $\widetilde \beta_{k, \eta}$ as $\mu_k$. To later use, we define $\{\widetilde \beta^{\eta}_t\}$ as the continuous-time interpolation of $\{\widetilde \beta_{k, \eta}\}$, which is a continuous-time stochastic process defined by 
\[\widetilde \beta^{\eta}_t = \widetilde \beta_0 - \int_0^t \nabla \widetilde G(\widetilde \beta_{\lfloor s / \eta \rfloor, \eta})\,ds + \int_0^t \widetilde \Sigma_{\lfloor s / \eta \rfloor, \eta} d W_s.\]
By the triangle inequality, we have
\[W_2(\mu_k, \pi_{\rm{re}}) \leq W_2(\mu_k, \nu_k) + W_2(\nu_k, \pi_{\rm{re}}).\]

For the second term, by Lemma 5 in \citet{deng2020non}, given dissipativity \ref{cond3} and smoothness \ref{cond2} assumptions, we have
\begin{equation}\label{eq:bound-nu}
W_2(\nu_k, \pi_{\rm{re}}) \leq  C_0 \exp\left\{-\frac{k\eta(1 + C_{\bbD})}{C_{\cE}}\right\},
\end{equation}
where $C_{\cE} > 0$ is constant, $C_0 = \sqrt{2 C_{\cE} \cE(\nu_0 | \pi)}$ and \[C_{\bbD} = \inf_{t > 0}\frac{\bbD\left(\frac{d\nu_k}{d\pi}\right)}{\bbD_S\left(\frac{d\nu_k}{d\pi}\right)} - 1\]
is a non-negative constant depending on the swapping rate $S$ defined in \eqref{eq:swap-reLD}. 

Next is to bound the discretization error term $W_2(\mu_k, \nu_k)$. Let's assume $\beta_0 = \widetilde \beta_0$. Then,
\[\beta_t - \widetilde \beta^{\eta}_t = -\int_0^t \left(\nabla G(\beta_s) - \nabla \widetilde G(\widetilde \beta_{\lfloor s / \eta \rfloor, \eta})\right)\, ds + \int_0^t (\Sigma_s - \widetilde \Sigma_{\lfloor s / \eta \rfloor, \eta})\,d W_s.\]

By the triangle inequality and Minkowski inequality, we have
\begin{align}\label{eq:decom-I}
\begin{split}
\bbE[\|\beta_t - \widetilde \beta^{\eta}_t\|_2^2]^{1/2} \leq & \bbE\left[\underbrace{\left\Vert \int_0^t \left(\nabla G(\beta_s) - \nabla \widetilde G(\widetilde \beta_{\lfloor s / \eta \rfloor, \eta})\right)\, ds \right\Vert_2^2}_{\coloneqq \cI_1}\right]^{1/2} \\
& \qquad + \bbE\left[\underbrace{\left\Vert \int_0^t (\Sigma_s - \widetilde \Sigma_{\lfloor s / \eta \rfloor, \eta})\,d W_s \right\Vert_2^2}_{\coloneqq \cI_2}\right]^{1/2}.    
\end{split}
\end{align}

Let's focus on $\cI_1$ first. Notice that
\begin{align}\label{eq:decom-I1}
\begin{split}
\bbE[\cI_1] & \leq \bbE\left[t \int_0^{t}\|\nabla G(\beta_s) - \nabla \widetilde G(\widetilde \beta_{\lfloor s / \eta \rfloor, \eta})\|_2^2 \, ds\right]\\
&\leq 3 t \left\{\bbE\left[\underbrace{\int_{0}^{t}\|\nabla G(\beta_s) - \nabla G(\widetilde \beta_{\eta}^s)\|_2^2\,ds}_{\coloneqq \cI_{1a}}\right]\right. \\
& \qquad + \bbE\left[\underbrace{\int_{0}^{t}\|\nabla G(\widetilde \beta_{\eta}^s) - \nabla G(\widetilde \beta_{\lfloor s / \eta \rfloor, \eta})\|_2^2\,ds}_{\coloneqq \cI_{1b}}\right]\\
& \qquad + \left.\bbE\left[\underbrace{\int_{0}^{t}\|\nabla G(\widetilde \beta_{\lfloor s / \eta \rfloor, \eta}) - \nabla \widetilde G(\widetilde \beta_{\lfloor s / \eta \rfloor, \eta})\|_2^2\,ds}_{\coloneqq \cI_{1c}}\right]\right\}.   
\end{split}
\end{align}
By the smoothness assumption \ref{cond2},
\begin{equation}\label{eq:bound-I1a}
\bbE[\cI_{1a}] \leq M^2 \bbE\left[\int_{0}^t\|\beta_s - \widetilde \beta^s_{\eta}\|_2^2\, ds\right].
\end{equation}
Denote $\kappa = \lfloor t / \eta \rfloor$, by the smoothness assumption \ref{cond2},
\begin{align}\label{eq:decom-I1b}
\begin{split}
&\bbE[\cI_{1b}] \\
\leq& M^2 \sum_{i = 0}^{\kappa} \bbE\left[\int_{i\eta}^{(i+1)\eta}\|\widetilde \beta^s_{\eta} - \widetilde \beta_{i, \eta}\|_2^2\,ds\right]\\
= &M^2 \sum_{i = 0}^{\kappa} \bbE\left[\int_{i\eta}^{(i+1)\eta}\left\Vert-\nabla \widetilde G(\widetilde \beta_{i, \eta})(s - i\eta) + \widetilde \Sigma_{i, \eta}\int_{i\eta}^s\,d W_t\right\Vert_2^2\,ds \right]\\
\leq &2M^2 \sum_{i = 1}^{\kappa} \left\{\int_{i\eta}^{(i+1)\eta} (s - i\eta)^2 \bbE[\|\nabla \widetilde G(\widetilde \beta_{i, \eta})\|_2^2]\,ds + \int_{i\eta}^{(i+1)\eta}\bbE\left[\left\Vert\widetilde \Sigma_{i, \eta}\int_{i\eta}^s\,d W_t\right\Vert_2^2\right]\,ds \right\}    
\end{split}
\end{align}
Observe that
\begin{align*}
\bbE[\|\nabla \widetilde G(\widetilde \beta_{i, \eta})\|_2^2] &= \bbE[\|\nabla G(\widetilde \beta_{i, \eta}) + \varphi_i \zeta_i\|_2^2]\\
&\leq 2\bbE [\|\nabla G(\widetilde \beta_{i, \eta})\|_2^2] + 4 \trace(\varphi_i\varphi_i^\top)]\\
& = 2 \bbE[\|\nabla G(\widetilde \beta_{i, \eta}) - \nabla G(\beta^*)\|_2^2] + 4 \trace(\varphi_i\varphi_i^\top)]\\
& \leq 2M^2\bbE[\|\widetilde \beta_{i, \eta} - \beta^*\|_2^2] + 4 \trace(\varphi_i\varphi_i^\top)]\\
& \leq 4M^2 \bbE[\|\widetilde \beta_{i, \eta}\|_2^2] + 4M^2\bbE[\|\beta^*\|_2^2] + 4 \trace(\varphi_i\varphi_i^\top)].
\end{align*}
By Lemma D.2 in \citet{chen2020accelerating}, if $0 < \eta < \alpha / M^2$, there exists a constant $\widetilde \delta_1(\tau_2, M, \alpha, \beta)$ such that
\[\sup_{i \geq 0}\bbE[\|\widetilde \beta_{i, \eta}\|_2^2] \leq \widetilde \delta_1(\tau_2, M, \alpha, \beta).\]
Hence,
\[\bbE[\|\nabla \widetilde G(\widetilde \beta_{i, \eta})\|_2^2] \leq \widetilde \delta_2(\tau_2, M, \alpha, \beta) + 4\trace(\varphi_i\varphi_i^\top)],\]
then
\begin{equation}\label{eq:bound-I1b1}
\int_{i\eta}^{(i+1)\eta} (s-i\eta)^2\bbE[\|\nabla \widetilde G(\widetilde \beta_{i, \eta})\|_2^2]\,ds \leq \frac{1}{3}\eta^3(\widetilde \delta_2(\tau_2, M, \alpha, \beta) + 4\trace(\varphi_i\varphi_i^\top)]).
\end{equation}
Because $\widetilde \Sigma_{i, \eta}$ is a diagonal matrix with diagonal elements $\sqrt{2\tau_1}$ or $\sqrt{2\tau_2}$, 
\[\bbE\left[\left\Vert\widetilde \Sigma_{i, \eta}\int_{i\eta}^s\,d W_t\right\Vert_2^2\right] \leq 8 \tau_2 p (s - i\eta).\]
Hence,
\begin{equation}\label{eq:bound-I1b2}
\int_{i\eta}^{(i+1)\eta} \bbE\left[\left\Vert\widetilde \Sigma_{i, \eta}\int_{i\eta}^s\,d W_t\right\Vert_2^2\right]\,ds \leq 4\tau_2 p \eta^2.
\end{equation}
Plug \eqref{eq:bound-I1b1} and \eqref{eq:bound-I1b2} into \eqref{eq:decom-I1b},
\begin{align}\label{eq:bound-I1b}
\begin{split}
\bbE[\cI_{1b}] &\leq 2M^2 \sum_{i = 1}^{\kappa} \left(\frac{1}{3}\eta^3\widetilde \delta_2(\tau_2, M, \alpha, \beta) + \frac{4p}{3}\eta^3\trace(\varphi_i\varphi_i^\top)] +  4 \tau_2 p \eta^2\right)\\
& \leq \widetilde \delta_3 \kappa p \eta^2 (1 + \eta \max_i \trace(\varphi_i\varphi_i^\top)]).
\end{split}
\end{align}
By the definition of $\varphi_k$, we have
\begin{equation}\label{eq:bound-I1c}
\cI_{1c} \leq \sum_{i = 0}^{\kappa} 2 p \eta \trace(\varphi_i\varphi_i^\top) \leq 2p\kappa\eta \max_i\trace(\varphi_i\varphi_i^\top).    
\end{equation}
Plug \eqref{eq:bound-I1a}, \eqref{eq:bound-I1b} and \eqref{eq:bound-I1c} into \eqref{eq:decom-I1},
\begin{equation}\label{eq:bound-I1}
\bbE[\cI_1] \leq 3 t M^2 \int_0^t \bbE[\|\beta_s - \widetilde \beta_{\eta}^s\|_2^2]\,ds + \widetilde \delta_4 p t(t+1) \left(\eta + \max_i\trace(\varphi_i\varphi_i^\top)]\right).
\end{equation}

For the term $\cI_2$, according to It\^{o} isometry, we have
\begin{align*}
&\bbE[\cI_2] \\
= &\sum_{j = 1}^{2p} \int_0^t \bbE[(\Sigma_s(j) - \widetilde \Sigma_{\lfloor s / \eta \rfloor, \eta}(j))^2]\,ds\\
\leq &\sum_{j = 1}^{2p} \sum_{i = 1}^{\kappa} \int_{i\eta}^{(i + 1)\eta}\bbE[(\Sigma_s(j) - \widetilde \Sigma_{i, \eta}(j))^2]\,ds\\
\leq &2 \sum_{j = 1}^{2p} \sum_{i = 1}^{\kappa}\left\{\int_{i\eta}^{(i + 1)\eta}\bbE[(\Sigma_s(j) - \Sigma_{i, \eta}(j))^2]\,ds + \int_{i\eta}^{(i + 1)\eta}\bbE[(\Sigma_{i, \eta}(j) - \widetilde \Sigma_{i, \eta}(j))^2]\,ds\right\},
\end{align*}
where we denote the $j$-diagonal entry of diagonal matrix $M$ as $M(j)$. Following the discussion of $A_4$ and $A_5$ defined in \eqref{eq:decom-A} in the proof of Theorem \ref{thm:convex-results}, we can bound $\cI_2$ by
\begin{equation}\label{eq:bound-I2}
\bbE[\cI_2] \leq \widetilde \delta_5(\tau_1, \tau_2, a) p (t+1) (\eta + \max_i \bbE[|\psi_i|]),
\end{equation}
where $\widetilde \delta_5(\tau_1, \tau_2, a) $ is a constant depends on $\tau_1$, $\tau_2$ and $a$.

Plug \eqref{eq:bound-I1}, \eqref{eq:bound-I2} into \eqref{eq:decom-I},
\begin{align*}
&\bbE[\|\beta_t - \widetilde \beta_t^{\eta}\|_2^2] \\
\leq &6 t M^2 \int_0^t \bbE[\|\beta_s - \widetilde \beta_{\eta}^s\|_2^2]\,ds + 2\widetilde \delta_4 p t(t+1) (\eta + \max_i \trace(\varphi_i\varphi_i^\top))\\
& +  \widetilde \delta_5(\tau_1, \tau_2, a) p (t + 1) (\eta + \max_i \bbE[|\psi_i|])\\
\leq &6 t M^2 \int_0^t \bbE[\|\beta_s - \widetilde \beta_{\eta}^s\|_2^2]\,ds \\
& + \widetilde \delta_6 p (t + 1) \left((t+1)\eta + t \max_i\trace(\varphi_i\varphi_i^\top) + \max_i \bbE[|\psi_i|].\right)
\end{align*}
By applying the integral form Gr\"{o}nwall’s inequality, we get
\begin{align*}
\bbE[\|\beta_t - \widetilde \beta_t^{\eta}\|_2^2] \leq \widetilde \delta_7(\tau_1, \tau_2, a, M, t, p)(\eta + \max_i\trace(\varphi_i\varphi_i^\top) + \max_i \bbE[|\psi_i|]).
\end{align*}
Thus,
\begin{align}\label{eq:discre-error}
\begin{split}
W_2(\mu_k, \nu_k) &\leq \bbE[\|\beta_{k, \eta} - \widetilde \beta_{k, \eta}\|_2] \\
& \leq \widetilde C(\tau_1, \tau_2, a, M, k, p)\left(\eta^{1/2} + \max_i\trace(\varphi_i\varphi_i^\top)^{1/2} + \max_i\bbE[|\psi_i|]^{1/2}\right),      
\end{split} 
\end{align}
together with \eqref{eq:bound-nu}, we finish the proof.
\end{proof}

\end{document}